\newcounter{alphasect}
\def\alphainsection{0}
\let\oldsection=\section
\def\section{%
  \ifnum\alphainsection=1%
    \addtocounter{alphasect}{1}
  \fi%
\oldsection}%
\renewcommand\thesection{%
  \ifnum\alphainsection=1%
    \Alph{alphasect}%
  \else%
    \arabic{section}%
  \fi%
}%
\newenvironment{alphasection}{%
  \ifnum\alphainsection=1%
    \errhelp={Let other blocks end at the beginning of the next block.}
    \errmessage{Nested Alpha section not allowed}
  \fi%
  \setcounter{alphasect}{0}
  \def\alphainsection{1}
}{%
  \setcounter{alphasect}{0}
  \def\alphainsection{0}
}%
\newtheorem{theorem}{Theorem}[section]
\newtheorem{proposition}[theorem]{Proposition}
\newtheorem{lemma}[theorem]{Lemma}
\newtheorem{problem}[theorem]{Problem}
\newtheorem{corollary}[theorem]{Corollary}
\theoremstyle{definition}
\newtheorem{remark}[theorem]{Remark}
\newtheorem{example}[theorem]{Example}
\newtheorem*{akn*}{Acknowledgment}
\definecolor{darkblue}{rgb}{0,0,0.6}
\definecolor{fg}{RGB}{34,139,34}  %% = ForestGreen
\newcommand{\skalarProd}[2]{\langle#1,#2\rangle}
\renewcommand{\D}{\operatorname{D}\hspace{-1pt}}
\newcommand{\Anti}{\operatorname{Anti}}
\newlist{thmenum}{enumerate}{1}
\setlist[thmenum]{label=\upshape(\alph*)}
\begin{document}
\begin{tikzpicture}[remember picture, overlay]
 \node [xshift=-1cm,yshift=15cm,rotate=-90] at (current page.south east)
 {Proceedings of the Royal Society A, Mathematical, Physical and Engineering Sciences (2021), doi: \href{https://doi.org/10.1098/rspa.2021.0158}{10.1098/rspa.2021.0158}.
 };
\end{tikzpicture}
\numberwithin{equation}{section}

\numberwithin{equation}{section}

\title{On in-plane drill rotations for Cosserat surfaces}
\knownauthors[saem]{saem,lewintan,neff}

\maketitle
\begin{abstract}
\noindent
We show under some natural smoothness assumptions that pure in-plane drill rotations as deformation mappings of a $C^2$-smooth regular shell surface to another one parametrized over the same domain are impossible provided that the rotations are fixed at a portion of the boundary. Put otherwise, if the tangent vectors of the new surface are obtained locally by only rotating the given tangent vectors, and if these rotations have a rotation axis which coincides everywhere with the normal of the initial surface, then the two surfaces are equal provided they coincide at a portion of the boundary. In the language of differential geometry of surfaces we show that any isometry which leaves normals invariant and which coincides with the given surface at a portion of the boundary, is the identity mapping. 
\end{abstract}
\keywords{compatibility, integrability, rigidity, in-plane drill rotations, isometry, invariant normal field, minimal surfaces, associate family of minimal surfaces, mean curvature, Rodrigues formula.}\medskip

\par\noindent\textbf{AMS 2020 subject classification:} 74B20, 74A20, 74A10, 74K25, 53A05, 53A10.

\section{Introduction}
In this contribution we consider the apparently novel question whether nontrivial pure in-plane drill rotations may appear in the deformation of shells if boundary conditions are prescribed that fix the Cosserat drill rotations at a portion of the boundary. 
\subsection{Main result}
\begin{proposition}\label{prop1}
	Let $\omega\subset \R^2$ be a bounded Lipschitz domain. Assume that $m,y_0\in C^2(\overline{\omega},\R^3)$ are regular surfaces, $Q\in C^1(\overline{\omega},\SO(3))$ and 
	\begin{align}
\nonumber	\D m(x)&=Q(x)\D y_0(x),\quad Q(x)\,n_0(x)=n_0(x)\,,\quad x\in \overline{\omega}\,,\\
 	 m|_{\gamma_{d}}&=y_0|_{\gamma_{d}}\,,
	\end{align}
	where $n_0=\frac{\partial_1 y_0\times \partial_2 y_0}{\norm{\partial_1 y_0\times \partial_2 y_0}}$ denotes the normal field on $y_0(\omega)$ and $\gamma_d$ is a relatively open, non-empty subset of the boundary $\partial\omega$.
	Then $m\equiv y_0$.
\end{proposition}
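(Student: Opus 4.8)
The plan is to show that the three hypotheses force $Q$ to be a rotation about $n_0$ through an angle that is \emph{constant} on $\overline\omega$, and that the boundary condition on $\gamma_d$ then forces this angle to vanish, so that $\D m\equiv\D y_0$ and hence $m\equiv y_0$.

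First I would record the geometric content of $\D m=Q\,\D y_0$: since $Q\in\SO(3)$ one has $\D m^{T}\D m=\D y_0^{T}\D y_0$, so $m$ and $y_0$ carry the same first fundamental form, and $\partial_1 m\times\partial_2 m=(\det Q)\,Q(\partial_1 y_0\times\partial_2 y_0)=Q(\partial_1 y_0\times\partial_2 y_0)$, so by $Q\,n_0=n_0$ the unit normal of $m$ equals $n_0$. Thus $m$ is an isometric deformation of $y_0$ with invariant normal field, and the claim is that there is no nontrivial such deformation once the boundary is fixed. Next I would parametrise $Q$ by the Rodrigues formula, $Q=\mathbf 1+\sin\theta\,\Anti(n_0)+(1-\cos\theta)\,\Anti(n_0)^{2}$ with $\Anti(n_0)v=n_0\times v$; reading off $\cos\theta=\tfrac12(\operatorname{tr}Q-1)$ and recovering $\sin\theta\,n_0$ from the antisymmetric part $\tfrac12(Q-Q^{T})=\sin\theta\,\Anti(n_0)$ shows $\cos\theta,\sin\theta\in C^{1}(\overline\omega)$, so $\theta$ has a local $C^{1}$ lift and $\nabla\theta$ is a well-defined continuous field on $\omega$.

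The heart of the argument is the integrability (Schwarz) condition for $m$. Differentiating $\partial_i m=Q\,\partial_i y_0$ and using $\partial_1\partial_2 m=\partial_2\partial_1 m$ together with $\partial_1\partial_2 y_0=\partial_2\partial_1 y_0$ gives $(\partial_1 Q)\,\partial_2 y_0=(\partial_2 Q)\,\partial_1 y_0$. I would expand $\partial_i Q$ by the Rodrigues formula, using $\Anti(n_0)^{2}a=-a$ and $\bigl(\partial_i\Anti(n_0)\bigr)a=\partial_i n_0\times a$ for tangent vectors $a\perp n_0$, and then split the resulting vector identity into its component along $n_0$ and its tangential component. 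Since $\partial_i n_0\perp n_0$, the products $\partial_i n_0\times a$ are purely normal; the normal part, after using the symmetry $\langle\partial_1 n_0,\partial_2 y_0\rangle=\langle\partial_2 n_0,\partial_1 y_0\rangle$ of the second fundamental form, collapses to $\sin\theta\cdot H=0$ with $H$ the mean curvature of $y_0$ (this is the point at which minimal surfaces and their associate families enter the picture, but it will not actually be needed), while the tangential part, after cancelling an invertible rotation factor, collapses to
\[
\partial_2\theta\,\partial_1 y_0-\partial_1\theta\,\partial_2 y_0=0 .
\]
Because $y_0$ is a regular surface, $\partial_1 y_0$ and $\partial_2 y_0$ are linearly independent at each point, so this forces $\nabla\theta\equiv0$; as $\overline\omega$ is connected, $\cos\theta$ and $\sin\theta$ are constants, i.e.\ $Q$ rotates the tangent plane of $y_0$ through one fixed angle $\theta$.

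Finally I would invoke the boundary condition. Differentiating $m=y_0$ along $\gamma_d$ (at a point where $\partial\omega$ has a tangent $\tau$, which exists since $\partial\omega$ is Lipschitz) gives $Q(\D y_0\,\tau)=\D y_0\,\tau$; since $\D y_0\,\tau$ is a nonzero vector orthogonal to $n_0$ and $Q$ is a rotation about $n_0$, its angle must be a multiple of $2\pi$ there, hence $Q\equiv\mathbf 1$ on $\overline\omega$ by the previous step. Then $\D m=\D y_0$ on the connected set $\overline\omega$ gives $m=y_0+c$ for a constant $c\in\R^{3}$, and $m|_{\gamma_d}=y_0|_{\gamma_d}$ with $\gamma_d\neq\emptyset$ forces $c=0$. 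I expect the main obstacle to be the integrability step: carrying out the tangential/normal decomposition cleanly (in particular seeing that the $(1-\cos\theta)$-terms cancel by symmetry of the second fundamental form and that the tangential part reduces to a pure linear-independence statement), together with the minor but necessary regularity check that the locally lifted angle $\theta$ is $C^{1}$, so that ``$\nabla\theta\equiv0$'' legitimately upgrades to ``$\theta$ constant'' on the possibly non-simply-connected $\omega$.
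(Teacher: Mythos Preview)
Your proposal is correct and follows essentially the same route as the paper: Rodrigues parametrisation of $Q$ about the axis $n_0$, the Schwarz compatibility condition $(\partial_1 Q)\partial_2 y_0=(\partial_2 Q)\partial_1 y_0$, a tangential/normal split showing $\nabla\theta\equiv0$ (with the normal part yielding the auxiliary $\sin\theta\cdot\EuScript{H}=0$), and finally the boundary condition forcing $Q\equiv\id$. The only cosmetic difference is that the paper applies $Q$ to $\D y_0$ \emph{before} differentiating, so the $(1-\cos\theta)\,n_0\otimes n_0$ term disappears immediately via $(n_0\otimes n_0)\D y_0=0$, whereas you differentiate $Q$ first and then have to observe that the $(1-\cos\theta)$ contributions cancel by the symmetry of $\mathrm{II}_{y_0}$; the paper also packages the tangential argument as ``$\vv*{X}{\alpha},\vv*{Y}{\alpha},n_0$ form a frame'' rather than factoring out a rotation, but these are the same computation.
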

The interest for this question is not coming from differential geometry per se, but is motivated from shell models with independent director fields, so called Cosserat-surfaces \cite{cosserat1909theorie}. The additional field is a rotation vector field $Q\in\SO(3)$, necessitating additional balance equations and offering the possibility to introduce new (material) parameters into the model, coupling in-plane tangent vector fields and the rotation field $Q$ by a stiffness $\mu_c>0$. The question of how to determine the Cosserat couple modulus $\mu_c$ is  largely open in the dedicated literature \cite{agn_birsan2013characterization,agn_birsan2012equations,agn_birsan2013existence,agn_birsan2014existence,agn_birsan2016dislocation,chroscielewski2011modeling,ibrahimbegovic1994stress,pietraszkiewicz2011refined,pietraszkiewicz2016resultant,wisniewski2000kinematics,wisniewski1998shell,acharya2020rotations, agn_birsan2014shells, fox1992drill,hughes1989drilling,murin2013consistent,wisniewski2010finite,yeh1993shell}. We focus therefore on the effect, this Cosserat couple modulus $\mu_c$ may have and arrive at investigating \emph{pure in-plane drill rotations} for arbitrary shell surfaces. In the course of this investigation (see section \ref{moti} below) we connect the initial question to more standard rigidity results \cite{agn_lankeit2017integrability,agn_munch2008curl,acharya2000nonlinear,Ciarlet1988} for solid bodies and thin shells.

With the result of Proposition \ref{prop1}, in the end we are seeing that the stiffness $\mu_c$ in Cosserat shell models is arguably connected to a boundary condition and therefore, its status as material parameter is in doubt.

Proposition \ref{prop1} can be seen in the language of classical differential geometry of surfaces as:
\begin{corollary}\label{cor:Intro}
Let $\omega\subset \R^2$ be a bounded Lipschitz domain. Assume that $m,y_0\in C^2(\overline{\omega},\R^3)$ are two regular surfaces and
\begin{align}
\nonumber\mathrm{I}_m(x)=[\D m(x)]^T\D m(x)&=[\D y_0(x)]^T\D y_0(x)=\mathrm{I}_{y_0}(x)\,,\qquad	n(x)=n_0(x)\quad\quad \forall x\in \overline{\omega}\,,\\
m|_{\gamma_d}&=y_0|_{\gamma_d}\,,
\end{align}
where $ n=\frac{\partial_1 m\times \partial_2 m}{\norm{\partial_1 m\times \partial_2 m}}$ and $ n_0=\frac{\partial_1 y_0\times \partial_2 y_0}{\norm{\partial_1 y_0\times \partial_2 y_0}}$ are the respective normal fields and $\gamma_d$ is a relatively open, non-empty subset of the boundary $\partial\omega$.
Then $m\equiv y_0$.
\end{corollary}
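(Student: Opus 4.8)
The plan is to deduce the corollary directly from Proposition~\ref{prop1} by building, out of the two surfaces, the pointwise rotation field $Q$ that the proposition requires. At each $x\in\overline{\omega}$ the triple $\{\partial_1 y_0(x),\partial_2 y_0(x),n_0(x)\}$ is a basis of $\R^3$, since $y_0$ is a regular surface, so I can define $Q(x)\in\R^{3\times 3}$ as the unique linear map with $Q(x)\,\partial_i y_0(x)=\partial_i m(x)$ for $i=1,2$ and $Q(x)\,n_0(x)=n(x)$. Equivalently $Q=[\partial_1 m\,|\,\partial_2 m\,|\,n]\,[\partial_1 y_0\,|\,\partial_2 y_0\,|\,n_0]^{-1}$, a formula that also shows $Q\in C^1(\overline{\omega},\R^{3\times 3})$: indeed $m,y_0\in C^2$ give $\partial_i m,\partial_i y_0\in C^1$, the normals $n,n_0$ are $C^1$, and the bracketed frame of $y_0$ is $C^1$ with $C^1$ inverse (nonvanishing determinant) by regularity.

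Next I would verify that $Q(x)\in\SO(3)$. Orthogonality is checked on the basis above: by hypothesis $\skalarProd{\partial_i m}{\partial_j m}=(\mathrm{I}_m)_{ij}=(\mathrm{I}_{y_0})_{ij}=\skalarProd{\partial_i y_0}{\partial_j y_0}$, while $\skalarProd{\partial_i m}{n}=0=\skalarProd{\partial_i y_0}{n_0}$ and $\skalarProd{n}{n}=1=\skalarProd{n_0}{n_0}$; hence $Q(x)$ preserves all inner products among frame vectors, i.e.\ $Q(x)^T Q(x)=\mathrm{Id}_3$. For the sign of the determinant, $\det[\partial_1 m\,|\,\partial_2 m\,|\,n]=\skalarProd{\partial_1 m\times\partial_2 m}{n}=\norm{\partial_1 m\times\partial_2 m}>0$ and likewise $\det[\partial_1 y_0\,|\,\partial_2 y_0\,|\,n_0]=\norm{\partial_1 y_0\times\partial_2 y_0}>0$, so $\det Q>0$, and therefore $\det Q=+1$.

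With $Q$ constructed, the remaining hypotheses of Proposition~\ref{prop1} are immediate: $\D m=Q\,\D y_0$ holds column by column by the very definition of $Q$, the identity $Q\,n_0=n=n_0$ is exactly the assumed coincidence of the normal fields, and the boundary condition $m|_{\gamma_d}=y_0|_{\gamma_d}$ is the one already assumed. Proposition~\ref{prop1} then gives $m\equiv y_0$, which is the claim.

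I do not anticipate a genuine obstacle: the entire mathematical substance lies in Proposition~\ref{prop1}, and the corollary is only its differential-geometric reformulation. The one spot worth a careful line is the orientation computation that pins down $\det Q=+1$ rather than $-1$; it hinges on $n$ and $n_0$ being the \emph{normalized cross products} of the respective tangent vectors, i.e.\ on the sign convention built into the definition of the normal field, so I would record that determinant identity explicitly rather than leave it implicit.
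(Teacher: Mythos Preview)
Your proposal is correct and follows essentially the same route as the paper: construct $Q=(\D m\,|\,n)(\D y_0\,|\,n_0)^{-1}$, check that it is a $C^1$ in-plane drill rotation field, and then invoke Proposition~\ref{prop1}. If anything, your treatment of the orientation issue $\det Q=+1$ is more explicit than the paper's, which simply asserts $Q(x)\in\SO(3)$ without recording the determinant identity you highlight.
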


The results of this paper should perhaps not come as a surprise to experts in the field of differential geometry. Indeed, except for minimal surfaces, the Gauss map $n_0$ already determines the surface essentially, cf. \cite[Theorem 2.5]{hoffman1983gauss}. On the contrary, minimal surfaces come with a family of `associate surfaces', which have all the same Gauss map but are distinct to each other. Comparable results to Corollary \ref{cor:Intro} can be found in \cite{abe1975isometric,hoffman1983gauss,eschenburg2010compatibility} where different methods of proof were used and any connection to applications were missing. However, the latter result does not belong to the standard textbook knowledge in differential geometry and it is completely unknown in the field of shell-theory. The aim of the present paper is to give a straight forward proof without the techniques coming from differential geometry. We use the Rodrigues representation formula for rotations with given axis as well as repeated properties of the cross-product.

In order to set the stage, we recall some of the better known rigidity and integrability theorems, for $3\text{D}-$bulk materials and $2\text{D}-$surfaces. For a warm up, we tackle first the small rotation problem which already discloses some of the necessary techniques. The paper is now structured as follows: after setting our notations, in the next subsections \ref{moti} and \ref{concept}, we will motivate the problem from an engineering point of view. The reader only interested in the mathematical development can safely skip this part. Then we are in section \ref{rigidity} recapitulating some well-known rigidity and integrability results for bulk materials and surfaces together with simple observations which turn up in our question. We complement this section with some preliminaries on rotations and induced boundary conditions. Then we consider in section \ref{small rot} the case where the rotations in Proposition \ref{propold} are assumed to be small. In the subsequent section \ref{large rot} we give the proof of Proposition \ref{prop1} for the large rotation problem. We end with a conclusion, in section \ref{conclusion}, putting our findings back in an engineering context.

The complementary problem
\begin{equation}
 \D m(x)= U(x)\,\D y_0(x),
\end{equation}
of finding all ``compatible'' in-plane stretches $U$, characterized by
\begin{equation}\label{eq:stretchTensorSzw}
 U(x)\,n_0(x) = \kappa^+(x)\,n_0(x), \quad U(x)\in\Sym^+(3), \quad \kappa^+(x)>0,
\end{equation}
has been completely solved more than ten years ago by Szwabowicz \cite{szwabowicz2008pure}.\footnote{Szwabowicz uses a different notation, but his stretch tensor is basically the stretch tensor $U$ satisfying \eqref{eq:stretchTensorSzw}. Note that since $U\in\Sym^+(3)$ it can be orthogonally diagonalized, the stretch $U(x)$ satisfying \eqref{eq:stretchTensorSzw} leaves the tangent plane $T_{y_0(x)}y_0(\omega)$ invariant. Therefore, the Gauss map is preserved as well.}

\subsection{Notation}
\noindent
For $n\in \mathbb{N}$, we denote the set of all $n\times n$ second order tensors by $\R^{n\times n}$. The identity tensor on $\R^{n\times n}$ is denoted by $\id_n$. As usual we set $\GL(n)= \{ X\in \R^{n\times n}\,|\, \det(X)\neq 0\}$, $\SO(n)=\{X\in \GL(n)\,|\, X^TX=\id_n\,, \det(X)=1\}$ and  $\so(n)=\{X\in \R^{n\times n}\,|\, X^T=-X \}$. The sets $\Sym(n)$ and $\Sym^+(n)$ denote the symmetric and positive definite symmetric tensors respectively. For all $X\in \R^{n\times n}$ we have $\sym X=\frac{1}{2}(X+X^T)\in \Sym(n)$ and $\skew X=\frac{1}{2}(X-X^T)\in\so(n)$.   For $a,b\in \R^n$ we denote the scalar product on $\R^n$ as $\skalarProd{a}{b}$ with associated (squared) norm $\norm{a}^2=\skalarProd{a}{a}$.  We denote $(a|b|c)$ as $3\times 3$ matrix with columns $a,b,c\in\R^3$. For $u\in C^1(\R^3,\R^3)$ we denote the derivative (the Jacobi matrix) of $u$ as $\D u=(\partial_1 u|\partial_2 u|\partial_3 u)$. For $\omega\subset\R^2$ we consider the mapping $m:  \omega\to\R^3$ as a deformation of the midsurface of a shell with Jacobi matrix $\D m = (\partial_1 m|\partial_2 m)$.

The canonical identification of $\so(3)$ and $\R^3$ is denoted by $\axl(\cdot)$. Note that with the standard vector product it is given by $(\axl A)\times \xi =A\,.\,\xi$ for all skew-symmetric matrices $A\in\so(3)$ and vectors $\xi\in \R^3$, so that
\begin{align}
\axl\begin{pmatrix}
0&-v_3&v_2\\v_3&0&-v_1\\-v_2&v_1&0
\end{pmatrix}\coloneqq\begin{pmatrix}
v_1\\v_2\\v_3
\end{pmatrix}\,,\qquad\quad 
\Anti\begin{pmatrix}
v_1\\v_2\\v_3
\end{pmatrix}\coloneqq\begin{pmatrix}
0&-v_3&v_2\\v_3&0&-v_1\\-v_2&v_1&0
\end{pmatrix}\,,
\end{align}
where the inverse of $\axl(\cdot)$ is denoted by $\Anti(\cdot)$. A map $y_0\in C^1(\overline{\omega},\R^3)$ is called a \textit{regular surface} if the Jacobi matrix $\D y_0=(\partial_1 y_0|\partial_2 y_0)$ has rank $2$, i.e., if the surface $y_0(\omega)$ has a well-defined tangent space at all of its (inner) points. Moreover, we make use of the \textit{first fundamental form} $\operatorname{I}_{y_0}$ on $y_0(\omega)$ in matrix representation
\begin{align}
\operatorname{I}_{y_0}:=[\D y_0]^T\D y_0=\begin{pmatrix}
\norm{\partial_1 y_0}^2&\skalarProd{\partial_1 y_0}{\partial_2 y_0}\\\skalarProd{\partial_1 y_0}{\partial_2 y_0}&\norm{\partial_2 y_0}^2
\end{pmatrix}\in \Sym^+(2),
\end{align}
where the positive definitness follows from $\rank{\D y_0}=2$. The matrix representation of the \textit{second fundamental form} on $y_0(\omega)$ is given by
\begin{align}
\operatorname{II}_{y_0}:=-[\D y_0]^T\D n_0=-\begin{pmatrix}
\skalarProd{\partial_1 y_0}{\partial_1 n_0}&\skalarProd{\partial_1 y_0}{\partial_2 n_0}\\\skalarProd{\partial_2 y_0}{\partial_1 n_0}&\skalarProd{\partial_2 y_0}{\partial_2 n_0}
\end{pmatrix}\in \Sym(2),
\end{align}
where the symmetry follows from the fact that $\skalarProd{n_0}{\partial_i y_0}=0$, for $i=1,2$.

\subsection{Engineering motivation: Cosserat shell models}\label{moti}
The elastic range of many engineering materials is restricted to small finite strains. Thin structures may typically undergo large rotations (by bending) but are accompanied by small elastic strains.

\hfill
\includegraphics[width=6cm]{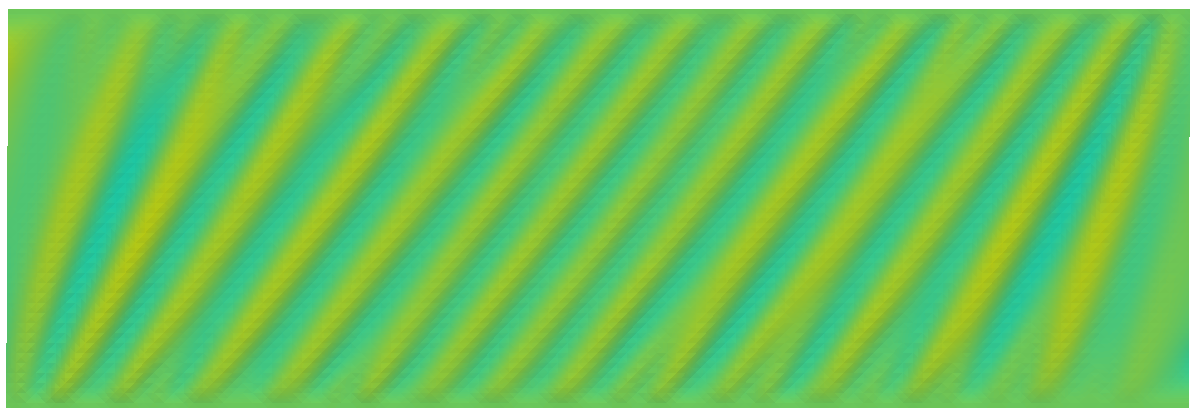}\hfill
\includegraphics[width=6cm]{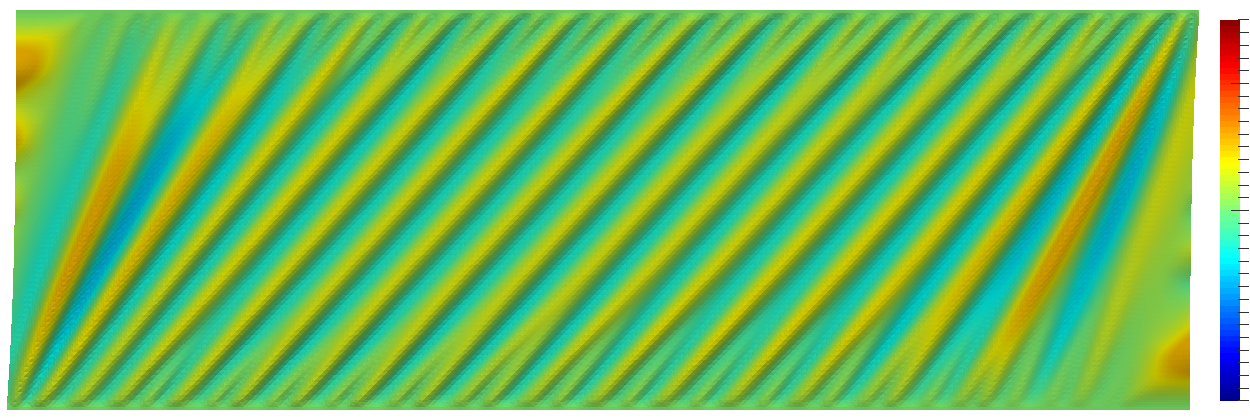}
\hfill{}

\hfill
\includegraphics[width=6cm]{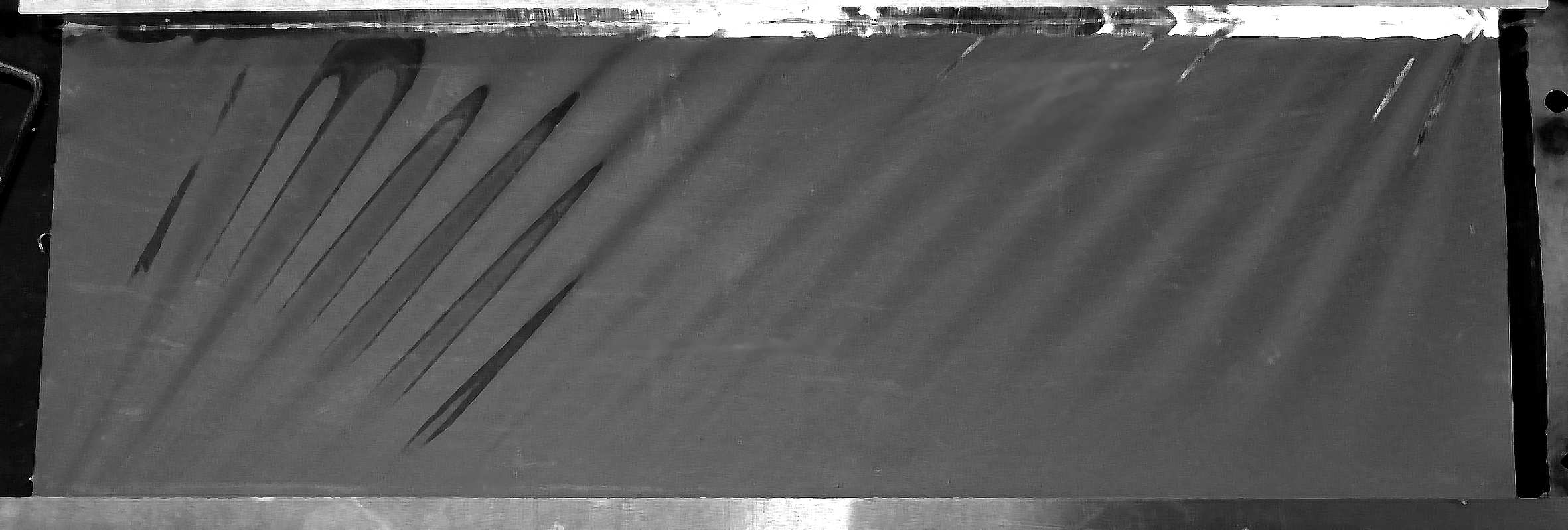}\hfill
\includegraphics[width=6cm]{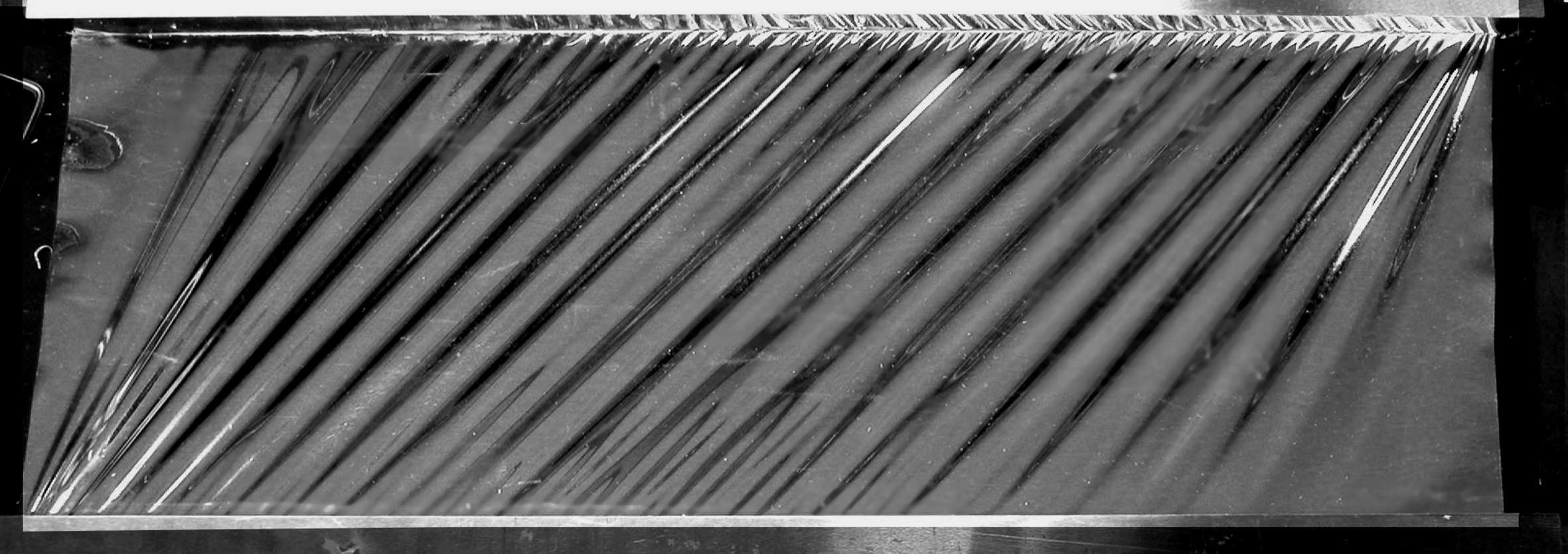}
\hfill{}

In \cite{agn_neff2004geometrically} the following geometrically nonlinear (but small elastic strain) isotropic planar shell model has been derived for such a situation: find the midsurface deformation  $m:  \omega\subset\R^2\to\R^3$ and the independent rotation field $\overline{R}:  \omega\subset \R^2\to \SO(3)$ minimizing the elastic energy 
\begin{align}\label{mimized}
 \nonumber I(m,\overline{R})&=\int_{\omega} h\Big[\mu\underbrace{\norm{\sym((\overline{R}_1|\overline{R}_2)^T\D m-\id_2)}^2}_{\text{shear-stretch energy}}+\mu_c\underbrace{\norm{\skew((\overline{R}_1|\overline{R}_2)^T\D m)}^2}_{\text{first order drill energy}}\\
 &\qquad +\frac{(\mu+\mu_c)}{2}\underbrace{\Big(\iprod{\overline{R}_3,\partial_{1}m}^2+\iprod{\overline{R}_3,\partial_{2}m}^2\Big)}_{\text{transverse shear energy}}+\frac{\mu\lambda}{2\mu+\lambda}\underbrace{\tr(\sym((\overline{R}_1|\overline{R}_2)^T\D m-\id_2))^2}_{\text{ stretch energy}}\Big]\notag\\
 &\qquad + h\Big[\mu L_c^2\norm{\mathcal{K}_s}^2+\mu L_c^{2+q}\norm{\mathcal{K}_s}^{2+q}\Big]\\
\nonumber&\qquad+\frac{h^3}{12}\Big[\mu\norm{\sym\mathcal{K}_b}^2+\mu_c\norm{\skew \mathcal{K}_b}^2+\frac{\mu\lambda}{2\mu+\lambda}\tr[\mathcal{K}_b]^2\Big]\,d\omega \to\min\;\text{w.r.t }(m,\overline{R}),
\end{align}
where the Cosserat curvature tensor is given by
\begin{align}
\mathcal{K}_s=(\overline{R}^T\!\!(\D\,(\overline{R}.e_1)|0),\overline{R}^T\!\!(\D\,(\overline{R}.e_2)|0),\overline{R}^T\!\!(\D\,(\overline{R}.e_3)|0)), \quad\quad \mathcal{K}_b:=\mathcal{K}_{s,3}=\overline{R}^T\!\!(\D\,(\overline{R}.e_3)|0),
\end{align}
and the boundary condition of place for the midsurface deformation $m$ on the Dirichlet part of the lateral boundary, $m|_{\gamma_d}=g_{d}(x,y,0)$ is imposed.
This shell model is derived by dimensional descent from a three-dimensional bulk Cosserat model \cite{cosserat1909theorie,agn_neff2004geometrically} and the appearing parameters are the isotropic shear modulus $\mu>0$, the second Lamé parameter $\lambda$ (with $2\mu+\lambda>0$) and the so-called \textbf{Cosserat couple modulus} $\mu_c\geq 0$, while $h>0$ is the thickness of the shell, $L_c\geq 0$ is a characteristic length and $q\geq 0$. This Cosserat shell model can be naturally related to the general 6-parameter theory of shells \cite{agn_birsan2013characterization,agn_birsan2012equations,agn_birsan2013existence,agn_birsan2014existence,agn_birsan2016dislocation,chroscielewski2011modeling,ibrahimbegovic1994stress,pietraszkiewicz2011refined,pietraszkiewicz2016resultant,wisniewski2000kinematics,wisniewski1998shell}, see also \cite{acharya2020rotations, agn_birsan2014shells, fox1992drill,hughes1989drilling,murin2013consistent,wisniewski2010finite,yeh1993shell}. One of the typical energy terms in these models is connected to so-called \textit{in-plane} drill rotations \cite{agn_birsan2014existence,wisniewski2006enhanced}. These in-plane drill rotations describe local rotations of the shell midsurface with rotation axis given by the local shell normal $n_0$ of $y_0$. Typically, the constitutive coefficients which governs this deformation mode are difficult to establish (and the ubiquitous Cosserat couple modulus $\mu_c>0$ appears prominently). Naghdi-type shell models with only one independent "Cosserat"-director do not have the drill-degree of freedom \cite{ljulj2020naghdi} but allow for transverse shear. Classical shell models neither have drill nor transverse shear \cite{agn_ghiba2020isotropic,eremeyev2006local,pietraszkiewicz2014drilling}. On the contrary, rotations about in-plane axis describe bending and twist. Even though a classical shell model (with Kirchhoff-Love normality assumption) does not have this kinematic degree of freedom, numerical approaches may introduce artificially shell-elements that possess locally this degree of freedom. The question is then which amount of stiffness should be adopted. It is observed that higher artificial in-plane drill stiffness strongly affects the calculated solution. In this context it is also known that flat shell topologies allow for unconstrained drill rotations and we will observe this in our paper as well: indeed unconstrained drill rotations may be observed not only for flat surfaces but for any minimal surface as well.\\
The extension of the planar shell model to initially curved shells has been recently given in \cite{agn_ghiba2020constrained,agn_ghiba2020isotropic,agn_ghiba2020isotropic2}. The planar shell model (\ref{mimized}) has been used to successfully predict the wrinkling behavior of very thin elastic sheets \cite{agn_sander2014numerical}. In these calculations, however, the Cosserat couple modulus $\mu_c$ has been set to zero throughout and $q=2$ has been adopted. In this case, the term in (\ref{mimized}) denoted by "first order drill energy" will drop out, while all other terms basically remain the same. It seems therefore mandatory to devote special attention to this in-plane drill term in order to understand it's physical and mathematical significance. This will be undertaken next.

\subsection{On the physical concept of in-plane drill - linear torsional spring} \label{concept}
Let $\omega\subset \R^2$ be a bounded Lipschitz domain and $y_0:  \omega\subset \R^2\to \R^3$  smooth and regular describing the mid-surface of a shell.
\begin{SCfigure}[][h]
\includegraphics[width=8cm]{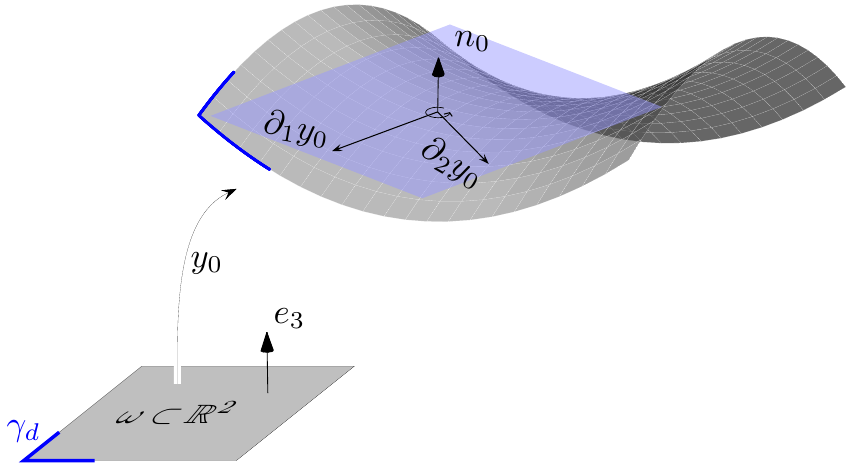}
\caption{The midsurface $y_0\in C^2(\overline{\omega},\R^3)$ of a shell is visualized, together with a tangent plane $T_{y_0(x)}y_0$ spanned by $\partial_1 y_0(x),\partial_2 y_0(x)$ and with unit normal $n_0(x)$. Prescribed boundary conditions at $\gamma_{d}$ fix the shell mid-surface in space.
}
\end{SCfigure}

Let us analyze the energy term corresponding to drill rotations shown in (\ref{mimized}). In order to measure in-plane drill rotations of a shell in a continuum description one first needs to endow the shell with a given orthonormal frame, tangent to the surface $y_0$, against which in-plane rotations can be seen. The role of this frame will be taken here by $Q_0\in \SO(3)$, defined by $Q_0\coloneqq\operatorname{polar}(\D y_0|n_0)$ (already used as such by Darboux, see \cite{darboux1941leccons}), where $\operatorname{polar}(F)$ denotes the orthogonal part in the polar-decomposition of $F\in \GL^+(3)$. First, it holds that
\begin{align}\label{skew}
\skew (Q_0^T(\D y_0|n_0))=0\quad\quad\text{for}\quad\quad Q_0\,=\,\operatorname{polar}(\D y_0|n_0)\in \SO(3),
\end{align}
due to the properties of the polar decomposition \cite{agn_neff2014grioli}
\begin{align}\label{nabla y}
(\D y_0|n_0)&=Q_0\underbrace{
\left(\begin{NiceArray}{CC|C}
 \Block{2-2}{\sqrt{[\D y_0]^T\D y_0}} && 0 \\
 &\hspace*{1.5cm}& 0 \\
 \hline
 0 & 0& 1
 \end{NiceArray}\right)
}_{\in \Sym^+(3)}\notag
\\ \overset{Q_0\,e_3=n_0}{\iff}& \quad (Q_{0_1},Q_{0_2})^T\D y_0 =\sqrt{[\D y_0]^T\D y_0}\in \Sym^+(2)\,. 
\end{align}
Here, it can be seen that $Q_0:  \omega\subset \R^2\to \SO(3)$ is an orthonormal frame whose third column coincides with the normal $n_0$ of the surface such that there is also \textit{no} induced \textit{transverse shear}. The three dimensional condition (\ref{skew}) can be expressed equivalently as (see also (\ref{nabla y}))
\begin{align}\label{skew3}
\nonumber \skew (Q_0^T(\D y_0|n_0))
&=\skew \begin{pmatrix}
\begin{array}{c |c}
(Q_{0_1},Q_{0_2})^T\D y_0 & 0\\\hline
0 & 1
\end{array}
\end{pmatrix}=0\iff \skew[(Q_{0_1}|Q_{0_2})^T\D y_0]=0,
\end{align}
which is the "drill energy" argument from equation (\ref{mimized})  and in this special situation we have an initially "drill-free" setting.

Now, what happens if we only \textbf{locally rotate (drill)} the given tangent vectors $\partial_1 y_0,\partial_2 y_0$ about the rotation axis $n_0$? For this, we take a drill rotation $Q(\alpha)\,n_0=n_0$, $Q(\alpha)=Q(\alpha(x))\in \SO(3)$, where $\alpha=\alpha(x)$ is the rotation angle and $n_0=n_0(x)$ is the prescribed axis of rotation normal to the surface $y_0$ and we consider locally the mapping 
\begin{align}
\D y_0\to Q(\alpha)\D y_0\,,
\end{align}
which leaves the first fundamental form $\operatorname{I}_{y_0}=\D y_0^T\D y_0=\big(Q(\alpha)\D y_0\big)^T\big(Q(\alpha)\D y_0\big)$ invariant. This implies that the surface $y_0$ is locally changed isometrically. For simplicity, taking into account the subsequent representation (\ref{expression}) of rotations with given axis of rotations, we consider presently only small drill rotation angles $\alpha$ so that we can duly approximate 
\begin{align}
Q(\alpha)\approx\id+\alpha\,\Anti(n_0).
\end{align}
Inserting then $(\id+\alpha\,\Anti(n_0))\D y_0$ instead of $\D y_0$ into the drill term from (\ref{skew3}) we obtain
\begin{align}\label{Qskew}
\nonumber\skew& \Big[(Q_{0_1}|Q_{0_2})^T(\id+\alpha\,\Anti(n_0))
\Big(\partial_1 y_0 | \partial_2 y_0\Big)\Big]\\
&=\underbrace{\skew ((Q_{0_1}|Q_{0_2})^T\D y_0)}_{=0}+\skew\Big[(Q_{0_1}|Q_{0_2})^T\,\alpha\,\Big(n_0\times\partial_1 y_0 \Big| n_0\times\partial_2 y_0 \Big)\Big]\\
&=\frac{\alpha}{2}\begin{pmatrix}
0& -[\iprod{Q_{0_2},n_0\times \partial_1 y_0}-\iprod{Q_{0_1},n_0\times \partial_2 y_0}]\\
[\iprod{Q_{0_2},n_0\times \partial_1 y_0}-\iprod{Q_{0_1},n_0\times \partial_2 y_0}]&0
\end{pmatrix}.\notag
\end{align}
We will now show that for any non-zero small angle of rotation $\alpha$, expression (\ref{Qskew}) is non-zero implying that the related drill energy term $h\,\mu_c\norm{\skew((Q_{0_1}|Q_{0_2})^T\D y_0)}^2$ serves to introduce a \textbf{linear torsional spring} \textbf{stiffness} against superposed in-plane rotations (with spring constant $h\mu_c$, where $\mu_c\geq0$ is the Cosserat couple modulus).

Note that at present, the discussion is purely local: at no place did we require that $Q(\alpha(x))\D y_0(x)$ can be determined as the gradient of a mapping. The global question whether $Q(\alpha)\,\D y_0$ can be the gradient of a regular embedding $m:  \omega\to \R^3$ with $\omega\subset\R^2$ will be considered next.

\section{Preliminaries and known rigidity results}\label{rigidity}
\subsection{Setting of the differential geometric problem}
Consider a given initial curved shell surface parametrized locally by $y_0: \overline{\omega}\subset\R^2\to\R^3$, where we assume that $y_0$ is sufficiently smooth and regular ($\rank (\D y_0)=2$). Let $m: \overline{\omega}\subset\R^2\to\R^3$ be any smooth deformation of the given surface $y_0$ parametrized over the same domain and consider a smooth  in-plane drill rotation field 
\begin{align}
Q: \overline{\omega}\subset\R^2\to \SO(3),\qquad Q(x)\,n_0(x)=n_0(x) \,,
\end{align}
where $\displaystyle n_0=\frac{\partial_1 y_0\times\partial_2 y_0}{\norm{\partial_1 y_0\times\partial_2 y_0}}$ is the unit normal vector field on the initial surface $y_0$. We will assume further on that
\begin{align}
Q|_{\gamma_d}=\id,
\end{align}
where  $\gamma_d$ is a relatively open, non-empty subset of the boundary $\partial\omega$. The motivation for this boundary condition will be given in Lemma \ref{lem1}.
\begin{problem}
Let $\omega\subset \R^2$ be a bounded Lipschitz domain and assume that $m,y_0\in C^2(\overline{\omega},\R^3)$ are regular surfaces. Does there exist a nontrivial in-plane drill rotation field $Q\in 
C^1(\overline{\omega},\SO(3))$ such that
\begin{align}\label{condition}
\nonumber\D m(x)&=Q(x)\D y_0(x),\quad\quad Q(x)n_0(x)=n_0(x),\quad\quad x\in \overline{\omega},\\
Q|_{\gamma_d}&=\id.
\end{align}
\end{problem}
\begin{SCfigure}[][h]
\includegraphics[width=9cm]{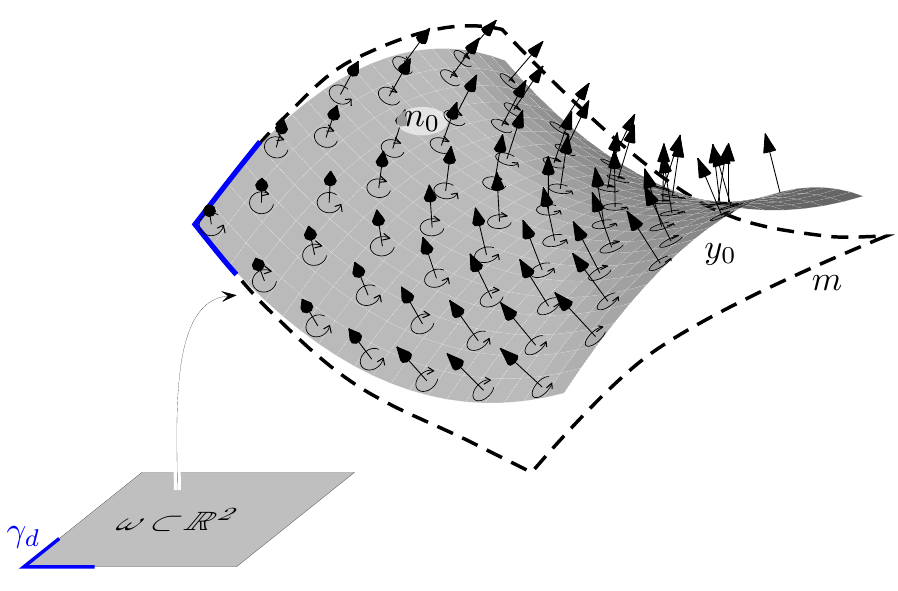}
\caption{At any point of the surface $y_0$ tangent planes are rotated in their own plane leaving the orientation of the surface invariant to obtain a new surface $m$. How can $m$ look like if at a part of the boundary $\gamma_d\subset \partial\omega$ the surfaces $m$ and $y_0$ coincide?}
\end{SCfigure}
\begin{remark}
	For a given shell surface $y_0$ any pure bending (flexure) deformation $m$ satisfies locally
	\begin{align}
	\D m(x)=Q(x)\D y_0(x),\quad\quad Q(x)\in \SO(3),
	\end{align}
	such that the first fundamental forms coincide $\operatorname{I}_m=[\D m]^T\D m=[\D y_0]^T\D y_0=\operatorname{I}_{y_0}$. Considering a flat piece of paper assumed to be made of unstretchable material, the rotations can even be fixed at one side of the paper still allowing for nontrivial bending deformations of the paper. However, the appearing local rotation field $Q(x)\in \SO(3)$  in this case is not of in-plane drill type, i.e., the rotation axis of $Q$ is not everywhere given by $n_0$.
\end{remark}\vspace{-1em}
\begin{SCfigure}[][h]
\includegraphics[width=6cm]{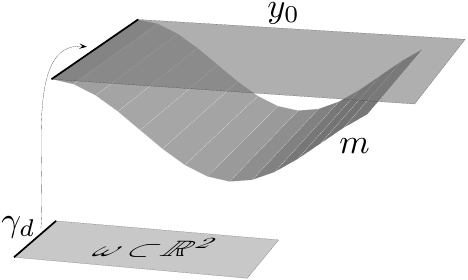}\hspace{2em}
\caption{A pure bending deformation of a surface leaves length invariant, the first fundamental form is unchanged, but there is local rotation. We have $\D m=Q(x)\,\D y_0$ for some non-constant $Q\in \SO(3)$, but the rotation in this example does not have an in-plane rotation axis.}
\end{SCfigure}

\begin{remark}[Rigidity in $3\text{D}$]\label{tim}
	Assume that $M: \Omega\subset\R^3\to \R^3$ and $Y_0: \Omega\subset \R^3\to \R^3$ are two diffeomorphisms, the formally similar to (\ref{condition}) looking condition 
	\begin{align}
	\D M(x)=Q(x)\,\D Y_0(x)\,,\qquad\qquad Q(x)\in \SO(3), \quad\qquad x\in \Omega\,,
	\end{align}
	implies that $Q(x)\equiv\operatorname{const}$ by rigidity \cite{agn_munch2008curl,acharya2000nonlinear}, as can easily be seen as follows.

\noindent
By the chain rule we have 
	\begin{align}
	\nonumber\D\,(M(Y_0^{-1}(\xi)))=\D M(Y_0^{-1}(\xi))\; \D\,[Y_0^{-1}(\xi)]\,, \quad \text{where}\quad
	\D\,[Y_0^{-1}(\xi)]=[\D Y_0(x)]^{-1},
	\end{align}
	therefore,
	\begin{align}
	\nonumber\D\,(M(Y_0^{-1}(\xi)))&=Q(x)[\D Y_0(x)]\;[\D Y_0(x)]^{-1},\\
	 \Rightarrow\quad &\D\,(M(Y_0^{-1}(\xi)))=Q(x)\in \SO(3)\xRightarrow{\text{rigidity}} Q\equiv\text{const.}
	\end{align}
	Note that the smoothness of $Q:\Omega\to \SO(3)$ can be a priori controlled by  the smoothness of $M$ and $Y_0$.
\end{remark}

\begin{remark}[Rigidity in $3\text{D}$]\label{more}
In the $3\text{D}-$case we have another condition which turns out to yield homogeneous rotations as well. Assume again that $M,Y_0: \Omega\subset \R^3\to \R^3$ are two diffeomorphisms. Then
\begin{align}
\forall x\in \Omega:  \quad[\D M(x)]^T\D M(x)=[\D Y_0(x)]^T\D Y_0(x)\quad \iff\quad M(x)=\overline{Q}\,Y_0(x)\,,
\end{align}
where $\overline{Q}\in \SO(3)$ is a constant rotation, as is shown, e.g. in \cite{Ciarlet1988}. However, as already seen, $2\text{D}-$structures are much more flexible in the sense that for $m,y_0: \omega\subset \R^2\to \R^3$ (smooth embeddings)
\begin{align}
\text{I}_m=[\D m(x)]^T\D m(x)&=[\D y_0(x)]^T\D y_0(x)=\text{I}_{y_0}\quad{\centernot\Longrightarrow}\quad m(x)=\overline{Q}\,y_0(x)\,,
\end{align}
as any pure bending deformation shows.
\end{remark}

\begin{example}[Flat case]
	Consider $y_0: \omega\subset \R^2\to \R^3$ with $y_0(x)=(x_1, x_2, 0)^T$. Then $\D y_0(x)=\begin{pmatrix}
	1&0\\0&1\\0&0
	\end{pmatrix}$ and $n_0\equiv e_3$. Thus, the conditions $Q(x)n_0=n_0$, $m\in C^1(\overline{\omega},\R^3)$ and $\D m(x)=Q(x)\,\D y_0(x)$ for $Q(x)\in \SO(3)$, together with $Q|_{\gamma_d}=\id_3$, imply
	\begin{align}
	\D m(x)=Q(x) \begin{pmatrix}
	1&0\\0&1\\0&0
	\end{pmatrix}=
\left(\begin{NiceArray}{CC|C}
 \Block{2-2}{\widehat{Q}(x)} && 0 \\
 &\hspace*{1cm}& 0 \\
 \hline
 0 & 0& 1
 \end{NiceArray}
 \right)\begin{pmatrix}
	1&0\\0&1\\0&0
	\end{pmatrix}
	= \left(\begin{NiceArray}{C}
 \Block{2-1}{\widehat{Q}(x)}\\ 
 \hspace*{1cm}\\
 \hline
 0
 \end{NiceArray}
 \right)\,,
	\end{align}
	with $\widehat{Q}(x)\in \SO(2)$. Hence, $\D m_3(x)\equiv 0$ and
	\begin{align}
	\D\begin{pmatrix}
	m_1(x)\\m_2(x)
	\end{pmatrix}=\widehat{Q}(x)\in \SO(2).
	\end{align} Then, again by rigidity \cite{agn_munch2008curl} we obtain that $\widehat{Q}\equiv\operatorname{const}$. Applying the boundary conditions we have $\widehat{Q}|_{\gamma_d}=\id_2$ and finally $Q\equiv\id_3$. Thus $m-y_0\equiv \operatorname{const}$.
\end{example}

\subsection{Preliminaries on rotations in $\SO(3)$ and the Euler-Rodrigues formula}\label{rotation}
We need to consider the matrix exponential function 
\begin{align}
\exp: \so(3)\to \SO(3), \qquad \exp(A)=\sum_{k=0}^{\infty}\frac{1}{k!}A^k,\quad A\in \so(3).
\end{align}
According to Euler, any rotation can be realized by a rotation around one axis with a certain rotation angle. However, any rotation $Q\in \SO(3)$ with prescribed rotation axis $n_0$ and angle of rotation $\alpha$ can be written with the Euler-Rodrigues representation in matrix notation as \cite{palais2009disorienting}
\begin{align}\label{expression}
\nonumber Q(\alpha)&=\exp(\Anti(\alpha\,n_0))=\id+\sin \alpha\,\Anti(n_0)+(1-\cos \alpha)\,(\Anti(n_0))^2\\
&=(1-\cos\alpha)\,n_0\otimes n_0+\cos\alpha\,\id +\sin\alpha\,\Anti(n_0)\,.
\end{align}

For small rotation angle\,$|\alpha|\!\ll\!1 $\;the above representation \eqref{expression} will be  approximated by 
\begin{align}
Q\approx \id+\alpha\, \Anti(n_0),
\end{align} since ~ $\sin\alpha\to\alpha$ ~ and ~ $1-\cos\alpha\to 0$ ~ \quad as \quad $|\alpha|\to 0$. For later use, note that any nontrivial rotation $\id\neq Q\in \SO(3)$ has (only) one rotation axis $\eta\in \R^3$ such that $Q\,\eta=\eta$ where $\eta$ is the eigenvector to the real eigenvalue $1$.

Taking the trace in \eqref{expression}, we also see that
\begin{align}\label{arc}
\tr(Q(\alpha(x)))=2\cos\alpha(x)+1\qquad \iff \qquad \cos(\alpha(x))=\frac{\tr(Q(\alpha(x)))-1}{2}\,.
\end{align}
The inverse cosine is a multivalued function and each branch is differentiable only on $(-1,1)$. Thus, for $Q=\id$ or $Q=\operatorname{diag}(-1,-1,1)$ we have $\displaystyle\frac{\tr(Q)-1}{2}\in\{-1,1\}$, i.e., in the neighborhood of both these rotations the simple formula \eqref{arc} is not meaningful for extracting a smooth rotation angle.
In order to solve this problem for small rotation angle $\alpha$ (for $Q$ near to $\id$) we proceed as follows (the simple idea is taken from \cite{liang2018efficient}). Multiplying \eqref{expression} on both sides with $\Anti(n_0)$ from the left gives
\begin{align}
\Anti(n_0)Q(\alpha)=\underset{\in\so(3)}{\underbrace{\cos\alpha\,\Anti(n_0)}}+ \sin\alpha\,(\Anti(n_0))^2.
\end{align}
Taking the trace gives
\begin{align}
\tr\big(\Anti(n_0)Q(\alpha)\big)&=- \sin\alpha\,\norm{\Anti(n_0)}^2=-2\sin\alpha\,,
\end{align}
since $\Anti(n_0)\in \so(3)$ and $\norm{n_0}=1$. Thus with \eqref{arc} we arrive at
\begin{align}\label{eq:tan_expression}
\sin\alpha=-\frac{\tr\Big(\Anti(n_0)Q\Big)}{2}\,,\qquad \cos\alpha=\frac{\tr(Q)-1}{2}\,\qquad\overset{\tr(Q)\neq1}{\Longrightarrow}\qquad \tan\alpha=-\frac{\tr\Big(\Anti(n_0)Q\Big)}{\tr(Q)-1}\,,
\end{align}
whereby any branch of the inverse tangent is smooth on $\R$. This shows that for $\tr(Q)-1>0$, i.e., in a large neighborhood of  $Q=\id$, the extraction of the rotation angle $\alpha$ from the rotation $Q$ is as smooth as $Q$ and the surface allows.

\begin{lemma}\label{sing}
	Assume $y\in C^2(\overline{\omega},\R^3)$ is a regular surface and let $Q\in C^1(\overline{\omega},\SO(3))$ be given. Assume that for a point $x_0\in \omega$ and $\alpha_0\in \R$ it holds
	\begin{align}
Q(x_0)=(1-\cos\alpha_0)\,n_0(x_0)\otimes n_0(x_0) +\cos\alpha_0\,\id_3+\sin\alpha_0\,\Anti(n_0(x_0))\,,
	\end{align}	
	where $n_0$ is the normal field on $y_0$. Then there exists a neighborhood $\mathcal{U}(x_0)\subset \omega$ and a continuously-differentiable function 
	\begin{align}
	\alpha:  \mathcal{U}(x_0)\to \R \quad\text{satisfying} \quad \alpha(x_0)=\alpha_0\,,
	\end{align}
	such that for all $x\in \mathcal{U}(x_0)$
	\begin{align}
Q(x)=(1-\cos\alpha(x))\,n_0(x)\otimes n_0(x) +\cos\alpha(x)\,\id_3+\sin\alpha(x)\,\Anti(n_0(x))\,.
	\end{align}
\end{lemma}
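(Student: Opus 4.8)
The idea is to recover the angle from $Q$ using \emph{two} scalar invariants of $Q$ rather than a single branch of $\arccos$ applied to $\tfrac{\tr Q-1}{2}$ (which is exactly where the trouble sits). Following \eqref{eq:tan_expression}, set for $x\in\overline\omega$
\begin{align*}
c(x):=\frac{\tr(Q(x))-1}{2},\qquad s(x):=-\frac{\tr\bigl(\Anti(n_0(x))\,Q(x)\bigr)}{2}.
\end{align*}
Since $y_0\in C^2$ is regular, $n_0$ is a $C^1$ unit normal field, so $\Anti(n_0)\in C^1$, and together with $Q\in C^1$ this yields $c,s\in C^1(\overline\omega,\R)$. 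In the present (drill) setting $Q(x)\,n_0(x)=n_0(x)$ for every $x$, so each $Q(x)$ is a rotation about the axis $n_0(x)$; hence by Euler--Rodrigues \eqref{expression} there is an angle $\beta(x)\in\R$ with
\begin{align*}
Q(x)=(1-\cos\beta(x))\,n_0(x)\otimes n_0(x)+\cos\beta(x)\,\id_3+\sin\beta(x)\,\Anti(n_0(x)),
\end{align*}
(possibly after replacing the axis $-n_0(x)$ by $n_0(x)$ and flipping the sign of the angle), and the trace identities that produced \eqref{eq:tan_expression} give $c(x)=\cos\beta(x)$ and $s(x)=\sin\beta(x)$; in particular $c(x)^2+s(x)^2=1$. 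Thus $x\mapsto(c(x),s(x))$ is a $C^1$ curve in the unit circle $S^1$ with $(c(x_0),s(x_0))=(\cos\alpha_0,\sin\alpha_0)$.

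Next I would lift this curve. The map $p\colon\R\to S^1$, $p(t)=(\cos t,\sin t)$, is a smooth covering, so it admits a $C^\infty$ local section $\sigma$ defined on a neighborhood of $(\cos\alpha_0,\sin\alpha_0)$ and normalized by $\sigma(\cos\alpha_0,\sin\alpha_0)=\alpha_0$. Choosing $\mathcal U(x_0)\subset\omega$ small enough that $(c(x),s(x))$ stays in the domain of $\sigma$, the function $\alpha:=\sigma\circ(c,s)\colon\mathcal U(x_0)\to\R$ is $C^1$, satisfies $\alpha(x_0)=\alpha_0$, and obeys $\cos\alpha(x)=c(x)$, $\sin\alpha(x)=s(x)$ on $\mathcal U(x_0)$. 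Concretely $\sigma$ is, up to a fixed multiple of $2\pi$, a branch of $\arccos c$ (or $-\arccos c$) when $\sin\alpha_0\neq0$, and of $\arcsin s$ (resp.\ $\pi-\arcsin s$) when $\cos\alpha_0=1$ (resp.\ $-1$); in each case the relevant branch is differentiable near $x_0$ because the singular value $\pm1$ of the relevant argument stays bounded away from its value at $x_0$ by continuity.

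It remains to match $\beta$ and $\alpha$. For $x\in\mathcal U(x_0)$ we have $\cos\beta(x)=c(x)=\cos\alpha(x)$ and $\sin\beta(x)=s(x)=\sin\alpha(x)$, hence $\beta(x)\equiv\alpha(x)\pmod{2\pi}$; since the right-hand side of \eqref{expression} depends on the angle only through its cosine and sine, replacing $\beta(x)$ by $\alpha(x)$ in the displayed Rodrigues identity above gives
\begin{align*}
Q(x)=(1-\cos\alpha(x))\,n_0(x)\otimes n_0(x)+\cos\alpha(x)\,\id_3+\sin\alpha(x)\,\Anti(n_0(x)),\qquad x\in\mathcal U(x_0),
\end{align*}
which is the assertion.

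The one genuinely delicate point — the reason the lemma is not immediate — is that the naive extraction $\alpha=\arccos\tfrac{\tr Q-1}{2}$ is non-differentiable precisely when $Q$ passes through $\id$ or through a rotation by $\pi$, i.e.\ when $c(x)$ reaches $\pm1$; circumventing this forces one to carry along the companion function $s(x)$ built from $\tr(\Anti(n_0)Q)$ and to read the extraction as lifting an arc in $S^1$ rather than inverting a single branch. The identity $c^2+s^2=1$, which is exactly where the drill condition $Q\,n_0=n_0$ enters, is what makes such a lift possible. The remaining ingredients — $C^1$-regularity of $c$ and $s$, and the fact that the Rodrigues expression \eqref{expression} sees the angle only through $(\cos,\sin)$ — are routine.
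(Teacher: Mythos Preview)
Your proof is correct and follows essentially the same approach as the paper's: both extract the two $C^1$ scalars $c(x)=\tfrac{\tr Q(x)-1}{2}$ and $s(x)=-\tfrac12\tr(\Anti(n_0(x))Q(x))$ from \eqref{eq:tan_expression} and then recover a smooth angle from the pair $(c,s)$. The only difference is packaging: the paper does a bare two-case split (a branch of $\arctan$ when $\tr Q(x_0)\neq1$, a branch of $\arccos$ when $\tr Q(x_0)=1$), while you phrase the same step as choosing a local $C^\infty$ section of the covering $t\mapsto(\cos t,\sin t)$; your version also makes explicit the drill hypothesis $Q(x)\,n_0(x)=n_0(x)$, which is needed for $(c,s)\in S^1$ and for the conclusion to make sense but is left implicit in the lemma's statement.
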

\begin{proof}
 The $C^1$-regularity of $\alpha$ in a sufficiently small neighborhood of $x_0$ follows from one of the expressions contained in \eqref{eq:tan_expression}. Indeed, for $\tr(Q(x_0))\neq1$ consider in \eqref{eq:tan_expression}$_3$ the branch of the inverse tangent which contains $\alpha_0$. Otherwise, for $\tr(Q(x_0))=1$ take in \eqref{eq:tan_expression}$_2$ the branch of the inverse cosine which contains $\alpha_0$, cf. Figure \ref{fig:branchtrig}.
\end{proof}

\begin{figure}[h!]
\includegraphics[page=1,width=8cm]{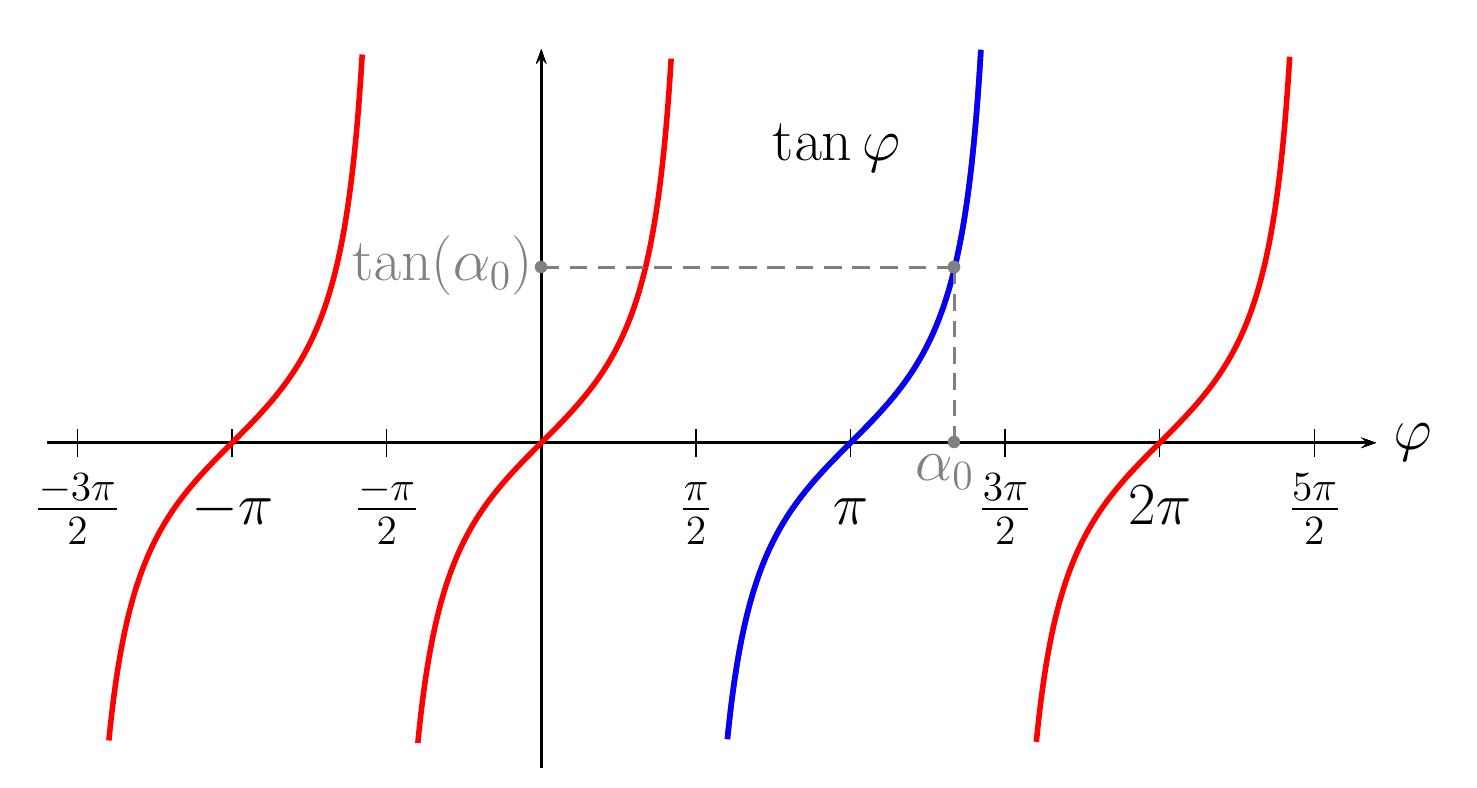}
\includegraphics[page=2,width=8cm]{trigfunc}
\caption{The range for the corresponding branch of the inverse trigonometric functions are indicated in blue.}\label{fig:branchtrig}
\end{figure}

\subsection{Boundary conditions}
\begin{lemma}\label{lem1}
	Let $\omega\subset \R^2$ be a bounded Lipschitz domain. Assume that $m,\,y_0\in C^1(\overline{\omega},\R^3)$ are regular surfaces, $Q\in C^0(\overline{\omega},\SO(3))$ and 
	\begin{align}\label{first}
	\D m(x)=Q(x)\D y_0(x)\,,\quad\quad x\in \overline{\omega}\,,\qquad\quad m|_{\gamma_d}=y_0|_{\gamma_d}\,,
	\end{align}
	where $\gamma_d$ is a relatively open, non-empty subset of the boundary $\partial\omega$.
If for all $x\in \gamma_d$ we have $Q(x)\,n_0(x)=n_0(x)$, where $n_0=\frac{\partial_1y_0\times\partial_2y_0}{\norm{\partial_1y_0\times\partial_2y_0}}$, then $Q|_{\gamma_d}\equiv\id$.
\end{lemma}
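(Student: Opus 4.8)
The plan is to exploit the Dirichlet condition $m|_{\gamma_d}=y_0|_{\gamma_d}$ to show that at each point of $\gamma_d$ the rotation $Q$ fixes, besides the prescribed normal $n_0$, a second, linearly independent vector, namely a nonzero tangent vector of the surface; since a rotation in $\SO(3)$ that fixes two linearly independent vectors must be the identity (its fixed subspace would be at least two-dimensional, whereas a nontrivial rotation has a one-dimensional axis, see the Euler--Rodrigues formula \eqref{expression} in Section~\ref{rotation}), this forces $Q=\id$ there.

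First I would fix $x_0\in\gamma_d$ and produce a limiting boundary direction at $x_0$. Since $\omega$ is a bounded Lipschitz domain, $\partial\omega$ is, near $x_0$, the graph of a Lipschitz function, so $x_0$ is not isolated in $\partial\omega$; as $\gamma_d$ is relatively open, there is a sequence $x_k\in\gamma_d\setminus\{x_0\}$ with $x_k\to x_0$. The unit vectors $u_k:=(x_k-x_0)/\norm{x_k-x_0}$ lie on the unit circle of $\R^2$, which is compact, so after passing to a subsequence they converge to some unit vector $\tau\in\R^2$.

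Next, using $m,y_0\in C^1(\overline{\omega},\R^3)$ together with $m(x_k)=y_0(x_k)$ and $m(x_0)=y_0(x_0)$ (all these points lie in $\gamma_d$), I would divide the identity $m(x_k)-m(x_0)=y_0(x_k)-y_0(x_0)$ by $\norm{x_k-x_0}$ and pass to the limit. The differentiability of $m$ and $y_0$ at $x_0$ gives $\D m(x_0)\,u_k+o(1)=\D y_0(x_0)\,u_k+o(1)$, hence $\D m(x_0)\,\tau=\D y_0(x_0)\,\tau$. Combined with $\D m(x_0)=Q(x_0)\,\D y_0(x_0)$ this yields $Q(x_0)\,v_0=v_0$ for $v_0:=\D y_0(x_0)\,\tau$. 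Because $\rank\D y_0(x_0)=2$ and $\tau\neq 0$, the vector $v_0$ is a nonzero tangent vector of $y_0(\omega)$ at $y_0(x_0)$, hence linearly independent of $n_0(x_0)$, which $Q(x_0)$ also fixes by hypothesis. Therefore $Q(x_0)$ fixes a two-dimensional subspace, so $Q(x_0)=\id$; as $x_0\in\gamma_d$ was arbitrary, $Q|_{\gamma_d}\equiv\id$.

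The only delicate point is the first step, making sense of a ``tangential direction'' of $\gamma_d$ when $\partial\omega$ is merely Lipschitz; this is settled by the compactness argument above, since one never needs the boundary curve to be differentiable at $x_0$, only a single convergent sequence of normalized difference quotients. (Alternatively, one could first establish $Q=\id$ at the differentiability points of $\partial\omega$ lying in $\gamma_d$, which form a dense subset, and then invoke the continuity of $Q$ on $\overline{\omega}$.) Everything else reduces to the elementary structure of $\SO(3)$ already recorded in Section~\ref{rotation}.
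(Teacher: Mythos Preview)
Your proof is correct and follows essentially the same strategy as the paper: at each point of $\gamma_d$ one produces, from the Dirichlet condition, a nonzero tangent vector fixed by $Q$, and then concludes $Q=\id$ since a nontrivial rotation in $\SO(3)$ has a one-dimensional axis. The only difference is in the extraction of that tangent vector: the paper takes a $C^{0,1}$-parametrization $\gamma$ of $\gamma_d$, differentiates $m(\gamma(s))=y_0(\gamma(s))$ to obtain the fixed tangent $q(s)=\D y_0(\gamma(s))\,\dot\gamma(s)$ almost everywhere, and then uses the continuity of $Q$; you instead work pointwise via a compactness argument on normalized difference quotients. Your parenthetical alternative is in fact precisely the paper's route.
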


\begin{remark}
 For the conclusion of this lemma we only need the assumptions on $\gamma_d$. However, the required conditions in the interior of $\omega$ are those which will be considered later.
\end{remark}

\begin{proof}[Proof of Lemma \ref{lem1}]
	Consider a $C^{0,1}$-parametrization $\gamma:(0,1)\to \R^2$, $\gamma(0,1)\subset \gamma_d\subset\partial \omega$. Then, $m(\gamma(s))=y_0(\gamma(s))$ on $(0,1)$ implies for $\dot{\gamma}(s)\in \R^2$
	\begin{align}\label{second}
	\frac{d}{ds}m(\gamma(s))=\frac{d}{ds}y_0(\gamma(s)) \;\Rightarrow \; \D m(\gamma(s))\,\dot{\gamma}(s)=\D y_0(\gamma(s))\,\dot{\gamma}(s) \;\in \R^3 \qquad \text{a.e. on $(0,1)$.}
	\end{align}
	Hence,
	\begin{align}
	\D m(\gamma(s))\,\,\dot{\gamma}(s)\overset{\eqref{first}}{=}Q(\gamma(s))\D y_0(\gamma(s))\,\,\dot{\gamma}(s)\overset{\eqref{second}}{=}Q(\gamma(s))\underbrace{\D m(\gamma(s))\,\,\dot{\gamma}(s)}_{\eqqcolon q(s)\in \R^3}\qquad \text{a.e. on $(0,1)$.}
	\end{align}
	Thus $q(s)=Q(\gamma(s))\,q(s)$, ~ for almost all $s\in(0,1)$. Moreover, the vector $q(s)$ is a tangent vector to $y_0$ at $y_0(\gamma(s))$. Together with the assumption $Q(\gamma(s))\,n_0(\gamma(s))=n_0(\gamma(s))$ it follows that  $Q(\gamma(s))$ has two linear independent eigenvectors $q(s)$ and $n_0(\gamma(s))$. Since the axis of rotation is unique for any nontrivial rotation, it follows $Q(\gamma(\cdot))=\id$ a.e. on $(0,1)$ and by continuity $Q|_{\gamma_d}\equiv\id$.
\end{proof}

We repeat a similar reasoning for the small rotation case.  
\begin{lemma}\label{first lem}
	Let $\omega\subset \R^2$ be a bounded Lipschitz domain.	Assume that $m,\,y_0\in C^1(\overline{\omega},\R^3)$ are two regular surfaces, $A\in C(\overline{\omega},\so(3))$ and 
	\begin{align}\label{first1}
	\D m(x)=(\id+A(x))\D y_0(x)\,,\quad\quad x\in \overline{\omega}\,,\qquad\quad m|_{\gamma_d}=y_0|_{\gamma_d}\,,
	\end{align}
	where $\gamma_d$ is a relatively open, non-empty subset of the boundary $\partial\omega$.
If for all $x\in \gamma_d\subset {\partial\omega}$ we have $A(x)\,n_0(x)=0$, where $n_0=\frac{\partial_1y_0\times\partial_2y_0}{\norm{\partial_1y_0\times\partial_2y_0}}$, then $A|_{\gamma_d}\equiv 0$.
\end{lemma}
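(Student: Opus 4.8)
The plan is to mimic the proof of Lemma~\ref{lem1} almost verbatim, replacing the finite rotation $Q$ by the near-identity matrix $\id+A$, and replacing the uniqueness of the rotation axis by the elementary fact that a skew-symmetric $3\times3$ matrix annihilating two linearly independent vectors must vanish.

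First I would fix a $C^{0,1}$-parametrization $\gamma\colon(0,1)\to\R^2$ of a piece of $\gamma_d$, chosen so that $\dot\gamma(s)\neq0$ for almost every $s$ (possible since $\partial\omega$ is locally the graph of a Lipschitz function). From $m(\gamma(s))=y_0(\gamma(s))$ on $(0,1)$, differentiation along the curve gives $\D m(\gamma(s))\,\dot\gamma(s)=\D y_0(\gamma(s))\,\dot\gamma(s)$ for a.e.\ $s$. Inserting \eqref{first1} then yields
\[
\D y_0(\gamma(s))\,\dot\gamma(s)=\D m(\gamma(s))\,\dot\gamma(s)=(\id+A(\gamma(s)))\,\D y_0(\gamma(s))\,\dot\gamma(s),
\]
so, abbreviating $q(s):=\D y_0(\gamma(s))\,\dot\gamma(s)\in\R^3$, we obtain $A(\gamma(s))\,q(s)=0$ for a.e.\ $s\in(0,1)$.

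The second step is to exploit the boundary hypothesis $A(\gamma(s))\,n_0(\gamma(s))=0$ together with the identification $\axl$. Setting $a(s):=\axl A(\gamma(s))$, so that $A(\gamma(s))\,\xi=a(s)\times\xi$ for all $\xi\in\R^3$, the two relations become $a(s)\times q(s)=0$ and $a(s)\times n_0(\gamma(s))=0$. Since $\D y_0$ has rank $2$ and $\dot\gamma(s)\neq0$, the vector $q(s)$ is a nonzero tangent vector to $y_0$ at $y_0(\gamma(s))$, hence linearly independent of the normal $n_0(\gamma(s))$. Thus $a(s)$ is parallel to two linearly independent vectors, which forces $a(s)=0$, i.e.\ $A(\gamma(s))=0$ for a.e.\ $s\in(0,1)$. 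By continuity of $A$ this extends to the whole curve $\overline{\gamma((0,1))}$, and covering $\gamma_d$ by such curve pieces gives $A|_{\gamma_d}\equiv0$.

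I do not anticipate a genuine obstacle here, since the argument is a transcription of Lemma~\ref{lem1}. The only points that require a little care are the choice of the boundary parametrization with $\dot\gamma\neq0$ a.e.\ — which guarantees $q(s)\neq0$ and hence the linear independence of $q(s)$ and $n_0(\gamma(s))$ — and the replacement of the uniqueness-of-rotation-axis step by the cross-product computation above (equivalently, the observation that a $3\times3$ skew-symmetric matrix has rank $0$ or $2$, so a two-dimensional kernel forces it to vanish).
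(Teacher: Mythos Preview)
Your proposal is correct and follows essentially the same route as the paper: parametrize $\gamma_d$, differentiate the boundary identity to obtain $A(\gamma(s))\,q(s)=0$ for a tangent vector $q(s)$, and combine this with $A(\gamma(s))\,n_0(\gamma(s))=0$ to force $A(\gamma(s))=0$. Your cross-product argument via $\axl$ is exactly the paper's rank-two observation in different clothing, and you are in fact slightly more explicit than the paper about why $q(s)\neq0$ and about the continuity/covering step at the end.
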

\begin{proof}
We have again
	\begin{align}\label{second1}
	\D m(\gamma(s))\,\dot{\gamma}(s)=\D y_0(\gamma(s))\,\dot{\gamma}(s) \;\in \R^3 \qquad \text{a.e. on $(0,1)$}\,,
	\end{align}
for a $C^{0,1}$-parametrization $\gamma:(0,1)\to \R^2$, $\gamma(0,1)\subset \gamma_d\subset\partial \omega$. Hence, we obtain a.e. on $(0,1)$
\begin{align}
\nonumber	\D y_0(\gamma(s))\dot{\gamma}(s)&\overset{\eqref{second1}}{=}\D m(\gamma(s))\,\,\dot{\gamma}(s)\overset{\eqref{first1}}{=}\D y_0(\gamma(s))\,\,\dot{\gamma}(s)+A(\gamma(s))\,\D y_0(\gamma(s))\,\,\dot{\gamma}(s)\\
 \qquad \iff \qquad 0\,&= A(\gamma(s))\underbrace{\D y_0(\gamma(s))\,\,\dot{\gamma}(s)}_{\eqqcolon\,q(s)}=A(\gamma(s))\,q(s)\,,\quad \text{where} \quad q(s)\bot n_0(\gamma(s))\,.
	\end{align}
	From the assumption we moreover have $A(\gamma(s))\,n_0(\gamma(s))=0$ so that with $A(\gamma(s))\,q(s)=0$ along $\gamma_d$ we obtain $A(\gamma(s))=0$, since any non-zero skew-symmetric $3\times 3$ matrix $A$ has rank two.
\end{proof}	

\begin{proposition}[Compatibility for surfaces]\label{newcom} Let $v,w\in C^1(\omega,\R^3)$ be given vector fields with $\rank(v,w)=2$ everywhere and assume that $\omega$ is a bounded, simply connected domain. Then there exists a regular surface $m\in C^2(\omega,\R^3)$ such that
\begin{align}\label{comcon}
\D m(x)=\Big(v(x)\big|w(x)\Big)\,,\qquad x\in \omega\,,
\end{align}
if and only if the \textbf{compatibility condition}
\begin{align}
\partial_2 v(x_1,x_2)=\partial_1 w(x_1,x_2)\,,
\end{align}
holds. The surface $m$ is unique up to translation.
\end{proposition}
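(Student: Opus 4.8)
The plan is to prove the classical Poincaré-type compatibility statement for surfaces by reducing it to componentwise integrability of vector fields on a simply connected domain. First I would note that the necessity of the condition is immediate: if $m\in C^2(\omega,\R^3)$ with $\D m=(v|w)$, then $v=\partial_1 m$ and $w=\partial_2 m$, so $\partial_2 v=\partial_2\partial_1 m=\partial_1\partial_2 m=\partial_1 w$ by Schwarz's theorem on the equality of mixed partials (using $m\in C^2$). The regularity statement $\rank(v,w)=2$ then simply passes to $\D m$, so $m$ is a regular surface.

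For sufficiency, I would work one Cartesian component at a time. Fix $i\in\{1,2,3\}$ and consider the scalar $C^1$ vector field $F^{(i)}:=(v_i,w_i):\omega\to\R^2$. The compatibility hypothesis $\partial_2 v=\partial_1 w$ read in the $i$-th component is exactly $\partial_2 v_i=\partial_1 w_i$, i.e.\ $F^{(i)}$ is a closed (curl-free) vector field on $\omega$. Since $\omega$ is simply connected, the Poincaré lemma yields a scalar potential $m_i\in C^2(\omega,\R)$ with $\nabla m_i=F^{(i)}$, that is $\partial_1 m_i=v_i$ and $\partial_2 m_i=w_i$; concretely one may build $m_i$ by line integration, $m_i(x)=m_i(x_0)+\int_{\sigma} v_i\,d\xi_1+w_i\,d\xi_2$ along any path $\sigma$ from a fixed basepoint $x_0$ to $x$, with path-independence guaranteed by the closedness condition together with simple connectedness. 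Assembling $m:=(m_1,m_2,m_3)$ gives $m\in C^2(\omega,\R^3)$ with $\D m=(\partial_1 m|\partial_2 m)=(v|w)$, which is \eqref{comcon}, and $\rank(\D m)=\rank(v,w)=2$ makes $m$ a regular surface.

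For uniqueness up to translation, suppose $m,\widetilde m\in C^2(\omega,\R^3)$ both satisfy $\D m=\D\widetilde m=(v|w)$. Then $\D(m-\widetilde m)\equiv 0$ on the connected domain $\omega$, so $m-\widetilde m$ is constant; hence $m$ is determined by \eqref{comcon} up to an additive constant vector in $\R^3$, i.e.\ up to translation.

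The only genuinely substantive ingredient is the Poincaré lemma for scalar closed $1$-forms on a simply connected planar domain; everything else is Schwarz's theorem and connectedness. If one wants to avoid invoking differential forms, the path-integral argument can be made self-contained: one checks that $\oint_{\partial R} v_i\,d\xi_1+w_i\,d\xi_2=0$ for every small rectangle $R\subset\omega$ by Green's theorem and the closedness relation, and then extends to path-independence for arbitrary closed curves using simple connectedness (every loop is null-homotopic, and the homotopy can be subdivided into small rectangles). The main point to be careful about is that simple connectedness is exactly what makes the vanishing of $\curl F^{(i)}$ sufficient rather than merely necessary for the existence of a global potential; on a non-simply-connected $\omega$ the statement would fail, so this hypothesis cannot be dropped.
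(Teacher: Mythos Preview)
Your proof is correct and follows essentially the same approach as the paper's: both reduce the statement to the existence of a potential for a closed field on a simply connected domain (the Poincar\'e lemma), with necessity coming from Schwarz's theorem and uniqueness from connectedness. The paper's proof is considerably terser---it simply asserts that the existence of the potential $m\in C^2$ is equivalent to $\partial_2 v=\partial_1 w$ without spelling out the componentwise argument or the role of simple connectedness---so your version is a more detailed rendering of the same idea rather than a different route.
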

\begin{proof}
	We only need to observe that \eqref{comcon} reads
\begin{align}
(\partial_1 m \ | \ \partial_2 m) = (v \ | \ w)
\end{align}
and the existence of potential $m\in C^2(\omega,\R^3)$ is guaranteed if and only if 
\begin{align}
\partial_2  v=\partial_2\partial_1 m \overset{m\in C^2}{=}\partial_1\partial_2 m =\partial_1 w,
\end{align}
where the potential is unique up to a constant.
\end{proof}

\section{The small rotation case: $A\in \so(3)$}\label{small rot}
For $m,y_0: \omega\subset\R^2\to\R^3$ we discuss
\begin{align}
\D m(x)=Q(x)\,\D y_0(x)\,,\quad Q(x)\,n_0(x)=n_0(x)\,,\quad n_0\;\;\text{unit normal vector field on } y_0(\omega)\,,
\end{align}
where $Q\in C^1(\omega,\text{SO}(3))$. Consider a linear approximation of this situation for small rotation angle $\alpha$, $|\alpha|\ll1$. Then we can write
\begin{align}
&\D m(x)=\D y_0(x)+\D v(x),\quad Q(x)=\id+A(x)+\text{h.o.t.}\,, \quad A\in C^1(\omega, \so(3)). 
\end{align}
Note that we do not assume that $\mathrm\bf{\D v}$ is small. We only assume that the rotations are close to $\id$. Then,
\begin{align}
\D m(x)=\D y_0(x)+\D v(x)=(\id+A(x)+\ldots)\D y_0(x)\qquad\Rightarrow\qquad\D v(x)=A(x)\D y_0(x)+\ldots\,,
\end{align}
hence we may consider the new problem 
\begin{align}\label{nabla}
\D v(x)=A(x)\D y_0(x)\,,\qquad\quad A\in C^1(\omega,\so(3))\,,\qquad A(x)\,n_0(x)=0\,.
\end{align}
Therefore, we are led to study the problem

\begin{lemma}\label{LemAngle1}
	Let $\omega\subset\R^2$ be a bounded Lipschitz domain. Assume $y_0\in C^2(\omega,\R^3)$ is a regular surface and let $v\in C^2(\omega,\R^3)$. Moreover, assume $\alpha\in C^1(\omega,\R)$ and consider the system
		\begin{align}\label{exp v}
	\D v(x)&=\alpha(x)\;\Anti(n_0(x))\,\D y_0(x)\,,\quad\quad x\in \omega\,,
	\end{align}
	where $n_0=\frac{\partial_1 y_0\times \partial_2 y_0}{\norm{\partial_1 y_0\times \partial_2 y_0}}$ denotes the normal field on $y_0(\omega)$. Then $\alpha\equiv\operatorname{const}$.
\end{lemma}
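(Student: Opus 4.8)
The plan is to exploit the compatibility condition from Proposition~\ref{newcom} applied to the two columns of $\D v$, which are $v^{(1)} = \alpha\,\Anti(n_0)\,\partial_1 y_0 = \alpha\,(n_0\times\partial_1 y_0)$ and $v^{(2)} = \alpha\,(n_0\times\partial_2 y_0)$. Since $v\in C^2$, we have $\partial_2 v^{(1)} = \partial_1 v^{(2)}$, which gives
\begin{align}\label{eq:compplan}
\partial_2\alpha\,(n_0\times\partial_1 y_0) + \alpha\,\partial_2(n_0\times\partial_1 y_0)
= \partial_1\alpha\,(n_0\times\partial_2 y_0) + \alpha\,\partial_1(n_0\times\partial_2 y_0)\,.
\end{align}
The idea is that the two terms with derivatives of $\alpha$ point along $n_0\times\partial_1 y_0$ and $n_0\times\partial_2 y_0$, which span the tangent plane, while the remaining terms can be rearranged. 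Expanding $\partial_i(n_0\times\partial_j y_0) = \partial_i n_0\times\partial_j y_0 + n_0\times\partial_i\partial_j y_0$ and using that $\partial_1\partial_2 y_0 = \partial_2\partial_1 y_0$, the second-derivative-of-$y_0$ contributions cancel, leaving
\begin{align}
\partial_2\alpha\,(n_0\times\partial_1 y_0) - \partial_1\alpha\,(n_0\times\partial_2 y_0)
= \alpha\,\bigl(\partial_1 n_0\times\partial_2 y_0 - \partial_2 n_0\times\partial_1 y_0\bigr)\,.
\end{align}

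Next I would take the scalar product of this identity with well-chosen vectors to extract information about $\partial_1\alpha$ and $\partial_2\alpha$. Pairing with $\partial_1 y_0$ and then with $\partial_2 y_0$ kills one of the left-hand terms in each case (since $\langle n_0\times\partial_i y_0, \partial_i y_0\rangle = 0$) and yields two scalar equations of the form
\begin{align}
\partial_2\alpha\,\langle n_0\times\partial_1 y_0,\partial_2 y_0\rangle &= \alpha\,\langle\partial_1 n_0\times\partial_2 y_0 - \partial_2 n_0\times\partial_1 y_0,\partial_1 y_0\rangle\,,\\
-\partial_1\alpha\,\langle n_0\times\partial_2 y_0,\partial_1 y_0\rangle &= \alpha\,\langle\partial_1 n_0\times\partial_2 y_0 - \partial_2 n_0\times\partial_1 y_0,\partial_2 y_0\rangle\,.
\end{align}
Since $\langle n_0\times\partial_1 y_0,\partial_2 y_0\rangle = \langle n_0, \partial_1 y_0\times\partial_2 y_0\rangle = \norm{\partial_1 y_0\times\partial_2 y_0} =: \sqrt{\det\mathrm{I}_{y_0}} \neq 0$, and similarly for the other coefficient (up to sign), both $\partial_1\alpha$ and $\partial_2\alpha$ are expressed as $\alpha$ times explicit smooth functions built from $y_0$ and its derivatives. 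The right-hand scalar products can be identified, via the Weingarten/Gauss relations, with curvature quantities — in fact with components of $\mathrm{II}_{y_0}$ or the mean curvature. One also gets a third scalar equation by pairing with $n_0$: since both left-hand terms lie in the tangent plane, $\langle\partial_1 n_0\times\partial_2 y_0 - \partial_2 n_0\times\partial_1 y_0, n_0\rangle = 0$, which is automatically a curvature identity (a form of the symmetry of the shape operator) and serves as a consistency check rather than new information.

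From the two scalar relations, $\alpha$ satisfies a first-order linear homogeneous ODE system $\nabla\alpha = \alpha\,F$ for a continuous vector field $F$ on the connected set $\omega$; such a system forces $\alpha$ to be either identically zero or nowhere zero, and more to the point, combined with the compatibility (the mixed partials of $\log|\alpha|$ agreeing wherever $\alpha\neq 0$), one shows $F$ is a gradient only in the degenerate situation. The cleanest route is: plug $\partial_1\alpha$ and $\partial_2\alpha$ back into the original vector identity~\eqref{eq:compplan} and collect the component normal to the tangent plane, i.e. the $n_0$-component. The left-hand side of~\eqref{eq:compplan} has $n_0$-component zero (each $n_0\times\partial_i y_0$ is tangential), so after substituting the expressions for $\partial_i\alpha$ one is left with $\alpha$ times a scalar curvature expression that must vanish; this forces either $\alpha\equiv 0$ (hence constant) or the surface to have a specific curvature property (vanishing mean curvature, i.e. minimal). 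I expect the main obstacle to be precisely this bookkeeping: carefully rewriting $\partial_1 n_0\times\partial_2 y_0 - \partial_2 n_0\times\partial_1 y_0$ in terms of the second fundamental form and $n_0$, and recognizing when the resulting scalar coefficient is $\sqrt{\det\mathrm{I}_{y_0}}$ times twice the mean curvature. Once that coefficient is pinned down, the conclusion $\alpha\equiv\mathrm{const}$ (in fact $\alpha\equiv 0$ in the non-minimal case, and an arbitrary constant being admissible in the flat/minimal case — consistent with the associate-family phenomenon mentioned in the introduction) follows immediately; the boundary condition then upgrades this to $\alpha\equiv 0$ in the full Proposition, but here only constancy is claimed.
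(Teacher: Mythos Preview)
Your setup is correct and matches the paper exactly: equating mixed partials of $v$ yields the vector identity
\[
\partial_2\alpha\,(n_0\times\partial_1 y_0) - \partial_1\alpha\,(n_0\times\partial_2 y_0)
= \alpha\,\bigl(\partial_1 n_0\times\partial_2 y_0 - \partial_2 n_0\times\partial_1 y_0\bigr).
\]
But from here you misread the geometry, and the proof does not close.

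The point you miss is that the right-hand side is a \emph{normal} vector, not a tangential one. Differentiating $\norm{n_0}^2=1$ gives $\skalarProd{n_0}{\partial_i n_0}=0$, so each $\partial_i n_0$ lies in the tangent plane; hence $\partial_i n_0\times\partial_j y_0$ is a cross product of two tangent vectors and is therefore parallel to $n_0$. Thus the vector identity says: a tangential vector (the left side) equals a multiple of $n_0$ (the right side). Since $\{n_0\times\partial_1 y_0,\ n_0\times\partial_2 y_0,\ n_0\}$ is a frame, all three coefficients vanish, i.e.\ $\partial_1\alpha=\partial_2\alpha=0$ directly. This is precisely the paper's argument.

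Your route instead assumes that pairing with $\partial_1 y_0$, $\partial_2 y_0$ produces nonzero right-hand sides, giving $\nabla\alpha=\alpha\,F$ with $F\neq 0$. In fact those scalar products vanish (for the reason above), so $F\equiv 0$ and you would be done --- but you don't notice it. Conversely, your claim that pairing with $n_0$ gives $\skalarProd{\partial_1 n_0\times\partial_2 y_0 - \partial_2 n_0\times\partial_1 y_0}{n_0}=0$ ``automatically'' is false: that scalar equals $-2\EuScript{H}\,\norm{\partial_1 y_0\times\partial_2 y_0}$, and the $n_0$-component of the identity yields the \emph{extra} constraint $\alpha\,\EuScript{H}=0$ (Proposition~\ref{prop:meanCurvature1}), not a tautology. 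As written, your argument never establishes constancy of $\alpha$ on a minimal surface: from $\nabla\alpha=\alpha F$ and $\EuScript{H}\equiv 0$ you get only ``$\alpha$ is nowhere zero or identically zero'', not $\alpha\equiv\mathrm{const}$. The fix is simply to observe that $F\equiv 0$.
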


\begin{proof}
 We write
\begin{align}
(\partial_1 v | \partial_2 v)&=\alpha\Anti(n_0)(\partial_1  y_0 |\partial_2  y_0)=\alpha\Big(\Anti(n_0)\partial_1  y_0\,\Big| \Anti(n_0)\partial_2  y_0\Big)\,,\\
\nonumber
\Longleftrightarrow \quad  \partial_1 v &=\alpha\,[\,n_0\times \partial_1 y_0\,]\,,\quad\partial_2 v=\alpha\,[\,n_0\times \partial_2 y_0\,]\,,
\end{align}
where we have used that $\Anti(n_0)\eta=n_0\times \eta$. We proceed by taking the mixed derivatives
\begin{align}\label{mixed}
\nonumber \partial_2 \partial_1 v&=\partial_2 \alpha\,[\,n_0\times \partial_1 y_0\,]+\alpha\,[\,\partial_2 n_0\times \partial_1 y_0+n_0\times \partial_2 \partial_1 y_0\,],\\
\partial_1 \partial_2 v&=\partial_1 \alpha\,[\,n_0\times \partial_2 y_0\,]+\alpha\,[\,\partial_1 n_0\times \partial_2 y_0+n_0\times \partial_1 \partial_2 y_0\,].
\end{align}
Hence, by equality of the mixed derivatives in \eqref{mixed} for $y_0,v\in C^2(\omega,\R^3)$ we must have
\begin{align}\label{A,B}
\partial_2 \alpha\underbrace{[\,n_0\times \partial_1 y_0\,]}_{\eqqcolon\vv*{Y}{0}}+\alpha\underbrace{[\,\partial_2 n_0\times \partial_1 y_0\,]}_{\eqqcolon\vv{B}}=\partial_1 \alpha\underbrace{[\,n_0\times \partial_2 y_0\,]}_{\eqqcolon-\vv*{X}{0}}+\alpha\underbrace{[\,\partial_1 n_0\times\partial_2  y_0\,]}_{\eqqcolon\vv{A}}\in \R^3\,.
\end{align}
Especially we have that $\vv{A}$ and $\vv{B}$ are normal vectors whereas $\vv*{X}{0}$ and $\vv*{Y}{0}$ are linear independent tangent vectors, since ~$\skalarProd{\vv*{X}{0}}{n_0} =0$, ~ $\skalarProd{\vv*{Y}{0}}{n_0} =0$ ~ and
\begin{align}\label{eq:X0Y0}
 \vv*{X}{0}\times \vv*{Y}{0} & = -(n_0\times \partial_2  y_0)\times (n_0\times \partial_1 y_0) = - \skalarProd{n_0}{\partial_2y_0\times\partial_1y_0}n_0 = \skalarProd{n_0}{\norm{\partial_1 y_0\times \partial_2 y_0}\cdot n_0}n_0\notag\\
 & = \norm{\partial_1 y_0\times \partial_2 y_0}\cdot n_0 = \partial_1 y_0\times \partial_2 y_0,
\end{align}
where we have used that  $n_0=\frac{\partial_1 y_0\times \partial_2 y_0}{\norm{\partial_1 y_0\times \partial_2 y_0}}$. Thus, the vector fields $\vv*{X}{0}$,  $\vv*{Y}{0}$ and $n_0$ form a $3-$frame on the surface $y_0(\omega)$. However, \eqref{A,B} reads,
\begin{align}\label{eq:linindepent}
 \partial_1\alpha \cdot\vv*{X}{0} + \partial_2\alpha\cdot \vv*{Y}{0} = \alpha\cdot(\vv{A}-\vv{B})= \delta\cdot n_0,
\end{align}
with a scalar field $\delta$, so that by the linear independence of the vector fields  $\vv*{X}{0}$,  $\vv*{Y}{0}$ and $n_0$ we must always have $\partial_1\alpha=\partial_2\alpha=\delta=0$, which gives $\alpha\equiv\operatorname{const}$.
\end{proof}
Thus, adding sufficient boundary conditions we arrive at

\begin{proposition}\label{propold}
	Let $\omega\subset\R^2$ be a bounded Lipschitz domain. Assume $y_0\in C^2(\omega,\R^3)$ is a regular surface and let $v\in C^2(\omega,\R^3)$. Moreover assume $\alpha\in C^1(\omega,\R) \cap C^0(\overline{\omega},\R)$ and consider the system
		\begin{align}\label{exp v1}
	\D v(x)&=\alpha(x)\;\Anti(n_0(x))\,\D y_0(x)\,,\quad\quad x\in \omega\,,\quad\quad\alpha|_{\gamma_d}=0\,,
	\end{align}
	where $n_0=\frac{\partial_1 y_0\times \partial_2 y_0}{\norm{\partial_1 y_0\times \partial_2 y_0}}$ denotes the normal field on $y_0(\omega)$ and $\gamma_d$ is a relatively open, non-empty, subset of the boundary $\partial\omega$. Then $\alpha\equiv0$.
\end{proposition}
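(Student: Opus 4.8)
The plan is to reduce everything to Lemma \ref{LemAngle1}. Under the present hypotheses — $y_0\in C^2(\omega,\R^3)$ a regular surface, $v\in C^2(\omega,\R^3)$, $\alpha\in C^1(\omega,\R)$, and the differential relation in \eqref{exp v1} — that lemma applies directly and already delivers $\alpha\equiv\operatorname{const}$ on $\omega$. Recall that the proof of Lemma \ref{LemAngle1} is purely local: equating the mixed derivatives \eqref{mixed} yields the identity \eqref{eq:linindepent}, and the pointwise linear independence of $\vv*{X}{0}$, $\vv*{Y}{0}$ and $n_0$ forces $\partial_1\alpha=\partial_2\alpha=0$ at every point of $\omega$; connectedness of the domain $\omega$ then upgrades ``locally constant'' to ``$\alpha\equiv c$ for a single constant $c\in\R$''. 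So no new interior argument is required.

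It then remains only to identify the constant $c$ from the boundary datum. Since $\alpha\in C^0(\overline{\omega},\R)$ and $\alpha\equiv c$ on the dense subset $\omega\subset\overline{\omega}$, continuity forces $\alpha\equiv c$ on all of $\overline{\omega}$, in particular on $\gamma_d$. As $\gamma_d$ is non-empty and $\alpha|_{\gamma_d}=0$ by assumption, we conclude $c=0$, i.e.\ $\alpha\equiv0$, which is the claim.

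There is no genuine obstacle here beyond bookkeeping; the single step meriting a word of care is the passage from ``$\alpha$ constant on the open set $\omega$'' to ``$\alpha$ constant on $\overline{\omega}$'', which is immediate from $\alpha\in C^0(\overline{\omega},\R)$ together with the density of $\omega$ in $\overline{\omega}$. One could alternatively bypass Lemma \ref{LemAngle1} by redoing the computation \eqref{mixed}--\eqref{eq:linindepent} inline and then integrating, but quoting the lemma is cleaner and avoids repetition.
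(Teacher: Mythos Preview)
Your proof is correct and follows essentially the same approach as the paper: invoke Lemma \ref{LemAngle1} to obtain $\alpha\equiv\operatorname{const}$ on $\omega$, then use the continuity $\alpha\in C^0(\overline{\omega},\R)$ together with the boundary condition $\alpha|_{\gamma_d}=0$ to conclude $\alpha\equiv0$. The paper's proof is a one-liner to the same effect; your additional remarks on connectedness and on extending the constant to $\overline{\omega}$ are just the bookkeeping details made explicit.
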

\begin{proof}
 By Lemma \ref{LemAngle1} it holds $\alpha\equiv\operatorname{const}$, so that due to the vanishing boundary condition $\alpha|_{\gamma_d}=0$ and the continuity of $\alpha$ we obtain $\alpha\equiv0$.
\end{proof}

\begin{corollary}\label{corlin}
Let $\omega\subset \R^2$ be a bounded Lipschitz domain. Assume that $m,\,y_0\in C^2(\overline{\omega},\R^3)$ are regular surfaces and $\alpha\in C^1(\omega,\R)\cap C^0(\overline{\omega},\R )$ is given with
\begin{align}\label{eq:Bedm}
\D m(x)=(\id+\alpha(x)\Anti(n_0(x)))\,\D y_0(x)\,,\quad\quad x\in \overline{\omega}\,,\quad\quad m|_{\gamma_d}=y_0|_{\gamma_d}\,,
\end{align}
where $n_0=\frac{\partial_1 y_0\times \partial_2 y_0}{\norm{\partial_1 y_0\times \partial_2 y_0}}$ denotes the normal field on $y_0(\omega)$ and $\gamma_d$ is a relatively open, non-empty, subset of the boundary $\partial\omega$. Then $m\equiv y_0$.	
\end{corollary}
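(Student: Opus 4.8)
The plan is to reduce Corollary \ref{corlin} directly to Proposition \ref{propold} by peeling off the identity part of the deformation gradient. Starting from $\D m(x)=(\id+\alpha(x)\Anti(n_0(x)))\D y_0(x)$, I would set $v\coloneqq m-y_0\in C^2(\omega,\R^3)$, so that $\D v(x)=\D m(x)-\D y_0(x)=\alpha(x)\Anti(n_0(x))\D y_0(x)$ for $x\in\omega$. This is precisely the system \eqref{exp v1} of Proposition \ref{propold}, and the boundary hypothesis $m|_{\gamma_d}=y_0|_{\gamma_d}$ gives $v|_{\gamma_d}=0$ on the relatively open, non-empty $\gamma_d\subset\partial\omega$. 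What remains before invoking Proposition \ref{propold} is the boundary condition on $\alpha$, namely $\alpha|_{\gamma_d}=0$; once that is in hand, Proposition \ref{propold} yields $\alpha\equiv 0$, hence $\D v\equiv 0$ on the (connected component containing $\gamma_d$ of the) domain, and then $v\equiv\operatorname{const}$; since $v$ vanishes on $\gamma_d$ this constant is zero, giving $m\equiv y_0$.

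The one genuinely substantive step is therefore showing $\alpha|_{\gamma_d}=0$, and this is exactly the content of the boundary Lemma \ref{first lem} applied with $A(x)=\alpha(x)\Anti(n_0(x))\in\so(3)$. Indeed \eqref{eq:Bedm} is \eqref{first1} with this choice of $A$, and along $\gamma_d$ we have $A(x)\,n_0(x)=\alpha(x)\,\Anti(n_0(x))\,n_0(x)=\alpha(x)\,(n_0(x)\times n_0(x))=0$, so the hypothesis of Lemma \ref{first lem} is met. The lemma then gives $A|_{\gamma_d}\equiv 0$, i.e. $\alpha(x)\Anti(n_0(x))=0$ for $x\in\gamma_d$; since $n_0(x)\neq 0$ the matrix $\Anti(n_0(x))$ is non-zero, whence $\alpha(x)=0$ on $\gamma_d$. (Concretely, $\alpha\in C^0(\overline{\omega})$ so it suffices to evaluate on the dense parametrized portion $\gamma(0,1)$ used in that lemma and pass to the limit.)

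Putting the pieces together: the argument is purely a chain of already-established results — first Lemma \ref{first lem} to get $\alpha|_{\gamma_d}=0$, then Proposition \ref{propold} to get $\alpha\equiv 0$, then the compatibility/uniqueness observation (Proposition \ref{newcom}, or just the elementary fact that a $C^1$ map with vanishing gradient on a connected open set is constant) together with $v|_{\gamma_d}=0$ to conclude $v\equiv 0$. I do not expect any real obstacle here; the only point requiring a word of care is the domain connectivity, since the hypotheses name a Lipschitz domain but $\gamma_d$ touches only one boundary component — it is enough to run the argument on $\omega$ which, being a domain, is connected, so $\alpha$ constant on $\omega$ plus $\alpha=0$ on $\gamma_d$ forces $\alpha\equiv 0$ throughout.

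\begin{proof}[Proof of Corollary \ref{corlin}]
Set $v\coloneqq m-y_0\in C^2(\omega,\R^3)\cap C^1(\overline{\omega},\R^3)$ and $A(x)\coloneqq\alpha(x)\,\Anti(n_0(x))\in\so(3)$. Then \eqref{eq:Bedm} gives
\begin{align}
\D m(x)=(\id+A(x))\,\D y_0(x)\,,\quad x\in\overline{\omega}\,,\qquad m|_{\gamma_d}=y_0|_{\gamma_d}\,,
\end{align}
and on $\gamma_d$ we have $A(x)\,n_0(x)=\alpha(x)\,(n_0(x)\times n_0(x))=0$. By Lemma \ref{first lem} it follows that $A|_{\gamma_d}\equiv 0$; since $n_0(x)\neq 0$ forces $\Anti(n_0(x))\neq 0$, this yields $\alpha|_{\gamma_d}=0$. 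Subtracting $\D y_0$ from \eqref{eq:Bedm} gives
\begin{align}
\D v(x)=\alpha(x)\,\Anti(n_0(x))\,\D y_0(x)\,,\quad x\in\omega\,,\qquad \alpha|_{\gamma_d}=0\,,
\end{align}
so Proposition \ref{propold} applies and $\alpha\equiv 0$. Hence $\D v\equiv 0$ on $\omega$, so $v$ is constant; as $v|_{\gamma_d}=0$ this constant vanishes, i.e. $m\equiv y_0$.
\end{proof}
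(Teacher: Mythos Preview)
Your proof is correct and follows essentially the same route as the paper: invoke Lemma \ref{first lem} (with $A=\alpha\,\Anti(n_0)$) to obtain $\alpha|_{\gamma_d}=0$, then apply Proposition \ref{propold} to get $\alpha\equiv 0$, hence $\D v\equiv 0$ and $m\equiv y_0$. You have simply made explicit the final passage from $\D v\equiv 0$ and $v|_{\gamma_d}=0$ to $v\equiv 0$, which the paper leaves implicit.
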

\begin{proof}
	We invoke Lemma \ref{first lem} to see that $m|_{\gamma_d}=y_0|_{\gamma_d}$ implies $\alpha|_{\gamma_d}\equiv 0$. Thus, for $v=m-y_0$ we can apply Proposition \ref{propold} to conclude that $\D v\equiv 0$.
\end{proof}

\begin{proposition}\label{prop:meanCurvature1}
 	Let $\omega\subset\R^2$ be a bounded Lipschitz domain. Assume $m,y_0\in C^2(\omega,\R^3)$ are regular surfaces and $\alpha\in C^1(\omega,\R)$ with 
		\begin{align}\label{exp v12}
	\D m(x)&=(\id+\alpha(x)\Anti(n_0(x)))\D y_0(x)\,,\quad\quad x\in \omega\,,
	\end{align}
	where $n_0=\frac{\partial_1 y_0\times \partial_2 y_0}{\norm{\partial_1 y_0\times \partial_2 y_0}}$ denotes the normal field on $y_0(\omega)$. Then 
	\begin{equation}\label{eq:conclH}
	 \forall \ x\in \omega: \quad \alpha(x)=0 \quad \text{or}\quad \EuScript{H}(x)=0,
	\end{equation}
where $\EuScript{H}$ denotes the \textit{mean curvature} on the surface $y_0$.
\end{proposition}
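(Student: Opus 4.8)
The plan is to rerun the mixed-partials argument from the proof of Lemma~\ref{LemAngle1}, this time applied to $m$ itself rather than to an increment $v$, and then to identify the normal vector $\vv{A}-\vv{B}$ that appears there with a multiple of the mean curvature of $y_0$; the conclusion \eqref{eq:conclH} then drops out by inspecting the normal component of the resulting compatibility relation.

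First I would differentiate \eqref{exp v12}: writing it columnwise gives $\partial_1 m=\partial_1 y_0+\alpha\,(n_0\times\partial_1 y_0)$ and $\partial_2 m=\partial_2 y_0+\alpha\,(n_0\times\partial_2 y_0)$. Since $m,y_0\in C^2(\omega,\R^3)$ and $\alpha\in C^1(\omega,\R)$, the mixed derivatives agree, and after cancelling $n_0\times\partial_2\partial_1 y_0=n_0\times\partial_1\partial_2 y_0$ one obtains precisely \eqref{A,B}, equivalently \eqref{eq:linindepent},
\begin{align*}
\partial_1\alpha\cdot\vv*{X}{0}+\partial_2\alpha\cdot\vv*{Y}{0}=\alpha\,(\vv{A}-\vv{B}),
\end{align*}
with $\vv{A}=\partial_1 n_0\times\partial_2 y_0$, $\vv{B}=\partial_2 n_0\times\partial_1 y_0$, and with $\vv*{X}{0},\vv*{Y}{0}$ tangent to $y_0(\omega)$ and $\vv*{X}{0}\times\vv*{Y}{0}=\partial_1 y_0\times\partial_2 y_0$ exactly as in \eqref{eq:X0Y0}. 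So far this is verbatim the computation behind Lemma~\ref{LemAngle1}.

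The new ingredient is to express $\vv{A}-\vv{B}$ through the mean curvature. Since $\norm{n_0}\equiv1$ gives $\skalarProd{\partial_i n_0}{n_0}=0$, each column of $\D n_0$ is tangent, so there is a continuous field $M\colon\omega\to\R^{2\times2}$ with $\D n_0=\D y_0\,M$; comparing with $\operatorname{II}_{y_0}=-[\D y_0]^T\D n_0=-\operatorname{I}_{y_0}M$ gives $M=-\operatorname{I}_{y_0}^{-1}\operatorname{II}_{y_0}$, hence $\tr M=-\tr(\operatorname{I}_{y_0}^{-1}\operatorname{II}_{y_0})=-2\,\EuScript{H}$. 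Substituting $\partial_1 n_0=M_{11}\partial_1 y_0+M_{21}\partial_2 y_0$ and $\partial_2 n_0=M_{12}\partial_1 y_0+M_{22}\partial_2 y_0$ into the definitions of $\vv{A}$ and $\vv{B}$ and using $\partial_i y_0\times\partial_i y_0=0$ yields $\vv{A}=M_{11}\,(\partial_1 y_0\times\partial_2 y_0)$ and $\vv{B}=-M_{22}\,(\partial_1 y_0\times\partial_2 y_0)$, so that
\begin{align*}
\vv{A}-\vv{B}=\tr(M)\,(\partial_1 y_0\times\partial_2 y_0)=-2\,\EuScript{H}\,\norm{\partial_1 y_0\times\partial_2 y_0}\;n_0 .
\end{align*}

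Finally I would read off the normal component of the compatibility relation: its left-hand side is tangential (a combination of the tangent fields $\vv*{X}{0},\vv*{Y}{0}$), while by the above its right-hand side equals $-2\,\alpha\,\EuScript{H}\,\norm{\partial_1 y_0\times\partial_2 y_0}\,n_0$, which is purely normal. Because $\{\vv*{X}{0},\vv*{Y}{0},n_0\}$ is a $3$-frame on $y_0(\omega)$ by \eqref{eq:X0Y0}, both sides must vanish; in particular $\alpha\,\EuScript{H}\,\norm{\partial_1 y_0\times\partial_2 y_0}\equiv0$, and since $\norm{\partial_1 y_0\times\partial_2 y_0}>0$ by regularity this forces $\alpha(x)\,\EuScript{H}(x)=0$ at every $x\in\omega$, which is \eqref{eq:conclH}. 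The only genuine computation is the Weingarten identity $\vv{A}-\vv{B}=-2\,\EuScript{H}\,\norm{\partial_1 y_0\times\partial_2 y_0}\,n_0$; everything else is the pointwise version of the argument of Lemma~\ref{LemAngle1}, which earlier discarded this normal component via the same frame argument. Note that no boundary condition is needed here, since the statement is purely pointwise.
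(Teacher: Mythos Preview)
Your proof is correct and follows essentially the same route as the paper: derive the compatibility relation \eqref{eq:linindepent} as in Lemma~\ref{LemAngle1}, then use the identity $\vv{A}-\vv{B}=-2\,\EuScript{H}\,\norm{\partial_1 y_0\times\partial_2 y_0}\,n_0$ together with the frame argument to force $\alpha\,\EuScript{H}=0$ pointwise. The only (welcome) difference is that you supply a self-contained derivation of that identity via the Weingarten relation $\D n_0=\D y_0\,M$, whereas the paper simply cites it as \eqref{eq:meanCurvature} from \cite{minimalSurfaces339}.
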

\begin{proof}
 Recall that it holds for the vector field
 \begin{equation}\label{eq:meanCurvature}
  \vv{A}-\vv{B} = \partial_1 n_0\times\partial_2  y_0 - \partial_2 n_0\times \partial_1 y_0 =-2\,\EuScript{H}\,\norm{\partial_1 y_0\times \partial_2 y_0}\,n_0=-2\EuScript{H}\,\partial_1 y_0\times \partial_2 y_0,
 \end{equation}
cf. \cite[Section 2.5, Theorem 2]{minimalSurfaces339}. Thus, for $v=m-y_0$ the validity of \eqref{eq:linindepent} implies that we have pointwise either a vanishing angle $\alpha$ or a vanishing mean curvature $\EuScript{H}$ since
\begin{equation*}
0\underset{\alpha\equiv\operatorname{const}}{\overset{\eqref{eq:linindepent}}{\equiv}}\alpha\cdot(\vv{A}-\vv{B})\overset{\eqref{eq:meanCurvature}}{=}-2\,\alpha\cdot\EuScript{H}\cdot\partial_1 y_0\times \partial_2 y_0\,.\qedhere
\end{equation*}
\end{proof}

\begin{remark}[Symmetry of the second fundamental form]
 It is interesting to note that conclusion \eqref{eq:conclH} can also be obtained from the symmetry property of the second fundamental form on the surface $m(\omega)$. Indeed, the normal vector field on $m(\omega)$ coincides with $n_0$ since
 \begin{align}
  \partial_1 m \times \partial_2 m \ &\overset{\mathclap{\eqref{exp v12}}}{=}\ (\partial_1y_0+\alpha\,\Anti(n_0)\, \partial_1 y_0)\times(\partial_2y_0 +\alpha\,\Anti(n_0)\, \partial_2 y_0)\notag\\
   & =  \partial_1 y_0 \times \partial_2 y_0 + \alpha^2(n_0\times \partial_1 y_0)\times(n_0\times \partial_2 y_0) \overset{\eqref{eq:X0Y0}}{=}(1+\alpha^2)\,\partial_1 y_0 \times \partial_2 y_0.
 \end{align}
Thus, for the second fundamental form on $m(\omega)$ we obtain
\begin{align}
 \Sym(2)\ni \mathrm{II}_m&= -[\D m]^T\D n= -[\D m]^T\D n_0 \overset{\eqref{exp v12}}{=}-[(\id+\alpha\Anti(n_0))\D y_0]^T \D n_0 \\
 & = -[\D y_0]^T \D n_0-\alpha\,[\D y_0]^T \Anti(n_0)^T\D n_0 = \mathrm{II}_{y_0}+\alpha\,[\D y_0]^T \Anti(n_0)\D n_0. \notag
\end{align}
Since $ \mathrm{II}_{y_0}\in\Sym(2)$, we are left with the single condition
\begin{align}\label{eq:ersterSchritt1}
 \alpha\,[\D y_0]^T \Anti(n_0)\D n_0 \in \Sym(2) \quad 
  &\Longleftrightarrow \quad \alpha \begin{pmatrix} * & \skalarProd{\partial_1 y_0}{n_0\times\partial_2 n_0} \\ \skalarProd{\partial_2 y_0}{n_0\times\partial_1 n_0} & * \end{pmatrix}\in\Sym(2) \notag\\
 &\Longleftrightarrow \quad \{\ \alpha = 0 \quad \text{or} \quad \skalarProd{n_0}{\partial_2 n_0\times\partial_1y_0}= \skalarProd{n_0}{\partial_1 n_0\times\partial_2 y_0}\} \notag \\
 &\Longleftrightarrow \quad \{\ \alpha = 0 \quad \text{or} \quad 0=\skalarProd{n_0}{\partial_1 n_0\times\partial_2 y_0-\partial_2 n_0\times\partial_1y_0}\} \notag \\
 &\overset{\mathclap{\eqref{eq:meanCurvature}}}{\Longleftrightarrow} \quad \{\ \alpha = 0 \quad \text{or} \quad 0=\skalarProd{n_0}{-2\EuScript{H}\,\partial_1 y_0\times \partial_2 y_0}\} \notag \\
 &\Longleftrightarrow \quad \{\ \alpha = 0 \quad \text{or} \quad \EuScript{H}\norm{\partial_1 y_0\times \partial_2 y_0}=0\} \notag\\
 & \Longleftrightarrow \quad \{\ \alpha = 0 \quad \text{or} \quad \EuScript{H}=0\}.
\end{align}

\end{remark}

\begin{corollary}
 	Let $\omega\subset\R^2$ be a bounded Lipschitz domain. Assume $y_0\in C^2(\omega,\R^3)$ is a regular surface and let $v\in C^2(\omega,\R^3)$. Moreover assume $\alpha\in C^1(\omega,\R)$ and consider the system
		\begin{align}\label{exp v0}
	\D v(x)&=\alpha(x)\,\Anti(n_0(x))\,\D y_0(x)\,,\quad\quad x\in \omega\,,
	\end{align}
	where $n_0=\frac{\partial_1 y_0\times \partial_2 y_0}{\norm{\partial_1 y_0\times \partial_2 y_0}}$ denotes the normal field on $y_0(\omega)$. If the mean curvature $\EuScript{H}$   of $y_0$ does not vanish at one point $y_0(x_0)$, then $\alpha\equiv0$.
\end{corollary}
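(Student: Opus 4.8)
The plan is to obtain the conclusion by simply combining the two immediately preceding results, so that essentially no new computation is required. First I would apply Lemma~\ref{LemAngle1} verbatim to the system~\eqref{exp v0}: the hypotheses $y_0,v\in C^2(\omega,\R^3)$ and $\alpha\in C^1(\omega,\R)$ are exactly those of that lemma, so equality of the mixed derivatives $\partial_2\partial_1 v=\partial_1\partial_2 v$ together with the fact that $\vv*{X}{0}$, $\vv*{Y}{0}$, $n_0$ form a $3$-frame forces $\partial_1\alpha=\partial_2\alpha=0$, i.e. $\alpha\equiv\operatorname{const}$ on the connected domain $\omega$.

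Next I would repackage the data for Proposition~\ref{prop:meanCurvature1}. Setting $m\coloneqq y_0+v\in C^2(\omega,\R^3)$, we get $\D m=\D y_0+\D v=(\id+\alpha\,\Anti(n_0))\,\D y_0$, which is precisely~\eqref{exp v12}; moreover $m$ is again a regular surface, since $\partial_1 m\times\partial_2 m=(1+\alpha^2)\,\partial_1 y_0\times\partial_2 y_0\neq 0$ (this identity is already recorded in the remark following Proposition~\ref{prop:meanCurvature1}; equivalently, $\id+\alpha\,\Anti(n_0)$ has eigenvalues $1,1\pm\komplexI\alpha$ and is therefore invertible). Hence Proposition~\ref{prop:meanCurvature1} applies and gives the pointwise dichotomy~\eqref{eq:conclH}: for every $x\in\omega$, either $\alpha(x)=0$ or $\EuScript{H}(x)=0$.

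Finally I would evaluate this dichotomy at the distinguished point $x_0$. Since $\EuScript{H}(x_0)\neq 0$ by hypothesis, the dichotomy forces $\alpha(x_0)=0$; combined with the constancy of $\alpha$ from the first step, this yields $\alpha\equiv 0$ on all of $\omega$.

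I expect no real obstacle here, because the substantive work — the rigidity of the angle in Lemma~\ref{LemAngle1} and the emergence of the mean curvature in Proposition~\ref{prop:meanCurvature1} — has already been done. The only point deserving a word is the verification that $m=y_0+v$ is a regular surface so that Proposition~\ref{prop:meanCurvature1} is actually applicable, and this is immediate from the identity $\partial_1 m\times\partial_2 m=(1+\alpha^2)\,\partial_1 y_0\times\partial_2 y_0$.
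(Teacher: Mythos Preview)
Your proposal is correct and follows essentially the same route as the paper: invoke Lemma~\ref{LemAngle1} for $\alpha\equiv\operatorname{const}$, then Proposition~\ref{prop:meanCurvature1} for the pointwise dichotomy $\alpha(x)=0$ or $\EuScript{H}(x)=0$, and evaluate at $x_0$. The paper's proof is a two-line version of exactly this; your extra care in defining $m\coloneqq y_0+v$ and checking its regularity via $\partial_1 m\times\partial_2 m=(1+\alpha^2)\,\partial_1 y_0\times\partial_2 y_0$ is a nice touch that the paper leaves implicit.
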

\begin{proof}
 It follows from the previous Proposition \ref{prop:meanCurvature1}, that if the mean curvature $\EuScript{H}$ does not vanish at some point $y_0(x_0)$, we must have $\alpha(x_0)=0$ and the conclusion follows, since $\alpha\equiv\operatorname{const}$ by Lemma \ref{LemAngle1}.
\end{proof}

\section{The large rotation case: $Q\in \SO(3)$}\label{large rot}
\begin{lemma}\label{LemAngle2}
 Let $\omega\subset \R^2$ be a bounded Lipschitz domain.	Assume that $m,\,y_0\in C^2(\omega,\R^3)$ are regular surfaces, $Q\in C^1(\omega,\SO(3))$ and 
	\begin{align}\label{origin3424}
	\D m(x)&=Q(x)\D y_0(x)\,,\quad\quad Q(x)\,n_0(x)=n_0(x)\,,\quad\quad x\in \omega\,,
	\end{align}
	where $n_0=\frac{\partial_1 y_0\times \partial_2 y_0}{\norm{\partial_1 y_0\times \partial_2 y_0}}$ denotes the normal field on $y_0(\omega)$.  Then $\cos\alpha\equiv\operatorname{const}$ and $\sin\alpha\equiv\operatorname{const}$, where $\alpha:\omega\to\R$ is any function denoting the rotation angle in the Euler-Rodrigues representation of $Q$.
\end{lemma}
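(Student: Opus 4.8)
The idea is to mirror the computation from Lemma \ref{LemAngle1} (the small-rotation case), but now with the full Euler--Rodrigues representation \eqref{expression} instead of its linearization. Locally, by Lemma \ref{sing}, I may write
\[
Q(x)=(1-\cos\alpha(x))\,n_0(x)\otimes n_0(x)+\cos\alpha(x)\,\id_3+\sin\alpha(x)\,\Anti(n_0(x))
\]
with $\alpha\in C^1$ on a neighbourhood of any given point. Abbreviate $c=\cos\alpha$, $s=\sin\alpha$. Since $\D m=Q\,\D y_0$ and $n_0\perp\partial_i y_0$, the $n_0\otimes n_0$ term annihilates $\partial_i y_0$, so
\[
\partial_i m = c\,\partial_i y_0 + s\,(n_0\times\partial_i y_0),\qquad i=1,2.
\]
This is the exact analogue of the expansion used before. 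Now I differentiate: $\partial_2\partial_1 m=\partial_1\partial_2 m$ because $m\in C^2$, and I write out both sides using the product rule, being careful to differentiate $c$, $s$, $n_0$ and $\partial_i y_0$.

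**The key computation.** Collecting terms, equality of the mixed partials forces
\[
(\partial_2 c)\,\partial_1 y_0 + (\partial_2 s)\,(n_0\times\partial_1 y_0) - (\partial_1 c)\,\partial_2 y_0 - (\partial_1 s)\,(n_0\times\partial_2 y_0)
\]
to equal a combination of the "curvature" vectors $s\,(\partial_1 n_0\times\partial_2 y_0-\partial_2 n_0\times\partial_1 y_0)$ together with $c\,(\partial_1\partial_2 y_0-\partial_2\partial_1 y_0)=0$ and the terms $s\,n_0\times(\partial_1\partial_2 y_0-\partial_2\partial_1 y_0)=0$. So the right-hand side is a multiple of $n_0$ (using \eqref{eq:meanCurvature}, $\partial_1 n_0\times\partial_2 y_0-\partial_2 n_0\times\partial_1 y_0=-2\EuScript H\,\partial_1 y_0\times\partial_2 y_0$, which is normal). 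On the left-hand side I use, exactly as in \eqref{eq:X0Y0}, that $\{\partial_1 y_0,\partial_2 y_0,n_0\}$ — equivalently $\{n_0\times\partial_2 y_0,\, n_0\times\partial_1 y_0,\, n_0\}$ — is a frame. But here the left-hand side mixes tangential vectors of \emph{both} types: $\partial_i y_0$ and $n_0\times\partial_i y_0$ all lie in the tangent plane, which is only two-dimensional, so I cannot immediately conclude each coefficient vanishes. The resolution is to project onto the tangent plane and onto $n_0$ separately. The $n_0$-component of the left side is $0$; the tangential component gives a genuine $2\times2$ linear relation among $\partial_1 y_0,\partial_2 y_0$. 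Expressing $n_0\times\partial_1 y_0$ and $n_0\times\partial_2 y_0$ in the basis $\{\partial_1 y_0,\partial_2 y_0\}$ (with coefficients built from the first fundamental form $\mathrm I_{y_0}$, since $n_0\times$ is a rotation by $\pi/2$ in the tangent plane up to the metric), I get two scalar equations
\[
\partial_2 c + (\text{metric coeffs})\,\partial_2 s - (\text{metric coeffs})\,\partial_1 s = 0,\qquad
-\partial_1 c + (\text{metric coeffs})\,\partial_2 s - (\text{metric coeffs})\,\partial_1 s = 0,
\]
i.e. $\D c$ is determined by $\D s$. Together with the constraint $c^2+s^2=1$, which gives $c\,\partial_i c + s\,\partial_i s = 0$, these should close up to force $\D c=\D s=0$.

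**The cleaner route.** Rather than carry the metric coefficients, I expect the slick argument is: take the cross product of the mixed-partial identity with $n_0$, or take inner products with $\partial_1 y_0$ and $\partial_2 y_0$. Dotting the left-hand side with $n_0\times\partial_1 y_0$ and with $n_0\times\partial_2 y_0$ (which kills the $n_0$-term on the right automatically, since $\langle n_0, n_0\times\partial_i y_0\rangle=0$, and also handles the curvature terms since those are parallel to $n_0$) isolates $\partial_i s$ in terms of quantities that, using \eqref{eq:X0Y0}-type identities, turn out to vanish — giving $\partial_1 s=\partial_2 s=0$ directly; then dotting with $\partial_1 y_0,\partial_2 y_0$ and using $c^2+s^2=1$ gives $\partial_1 c=\partial_2 c=0$. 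This is the step I would be most careful with: one must check that the curvature vector $\vv A-\vv B$ contributes only to the $n_0$-direction (which \eqref{eq:meanCurvature} guarantees) so that it drops out of the tangential projections, leaving a clean homogeneous system in $\D s$. Once $c$ and $s$ are locally constant, a connectedness argument (covering $\omega$ by the neighbourhoods of Lemma \ref{sing} and noting $\cos\alpha$, $\sin\alpha$ are globally well-defined continuous functions via \eqref{eq:tan_expression}) upgrades this to $\cos\alpha\equiv\operatorname{const}$ and $\sin\alpha\equiv\operatorname{const}$ on all of $\omega$.
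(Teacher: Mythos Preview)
Your overall strategy---write $Q$ via Euler--Rodrigues with a locally $C^1$ angle (Lemma~\ref{sing}), expand $\partial_i m = c\,\partial_i y_0 + s\,(n_0\times\partial_i y_0)$, equate mixed partials, and observe via \eqref{eq:meanCurvature} that the right-hand side is normal---is exactly the paper's route. The local-to-global step via continuity of $\cos\alpha,\sin\alpha$ from \eqref{eq:tan_expression} is also the same.

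Where you diverge is in the final linear-algebra step, and your ``cleaner route'' does not work as stated. Dotting the tangential identity with $n_0\times\partial_1 y_0$ does \emph{not} isolate $\partial_i s$: for instance $\langle\partial_2 y_0,\,n_0\times\partial_1 y_0\rangle=\langle n_0,\partial_1 y_0\times\partial_2 y_0\rangle=\norm{\partial_1 y_0\times\partial_2 y_0}\neq 0$, so a $\partial_1 c$ term survives. You cannot decouple $\D s$ from $\D c$ by this projection alone.

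The paper avoids the whole issue by never treating $\partial_i c$ and $\partial_i s$ as four separate unknowns. Since $\widetilde\alpha$ is locally $C^1$, one has $\partial_i c=-s\,\partial_i\widetilde\alpha$ and $\partial_i s=c\,\partial_i\widetilde\alpha$, so the tangential identity collapses immediately to
\[
\partial_1\widetilde\alpha\cdot\vv*{X}{\widetilde\alpha}+\partial_2\widetilde\alpha\cdot\vv*{Y}{\widetilde\alpha}=0,
\qquad
\vv*{Y}{\widetilde\alpha}=-s\,\partial_1 y_0+c\,(n_0\times\partial_1 y_0),\quad
\vv*{X}{\widetilde\alpha}=s\,\partial_2 y_0-c\,(n_0\times\partial_2 y_0).
\]
The single remaining task is to check that $\vv*{X}{\widetilde\alpha},\vv*{Y}{\widetilde\alpha}$ are linearly independent, which the paper does by the direct computation $\vv*{X}{\widetilde\alpha}\times\vv*{Y}{\widetilde\alpha}=\partial_1 y_0\times\partial_2 y_0\neq 0$. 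Your Route~1 is equivalent to this (the constraint $c\,\partial_i c+s\,\partial_i s=0$ forces $(\partial_i c,\partial_i s)=\lambda_i(-s,c)$, which is the same substitution), but you stop at ``these should close up'' precisely where the nontrivial content lies: you still need the nondegeneracy $\vv*{X}{\widetilde\alpha}\times\vv*{Y}{\widetilde\alpha}\neq 0$, and that computation is the heart of the argument.
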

\begin{proof}
 By the Euler-Rodrigues representation there exists a function $\alpha:\omega\to\R$ which fulfills
	\begin{align}\label{Q}
	Q(x)=(1-\cos \alpha(x))\,n_0(x)\otimes n_0(x)+\cos\alpha(x)\,\id+\sin\alpha(x)\,\Anti(n_0(x))\,.
	\end{align}
However, the rotation angle is uniquely defined up to the additive term $2k\pi$, $\pi\in\mathbb{Z}$. Thus, $\alpha$ even cannot assumed to be continuous. But we have already seen in the expressions \eqref{eq:tan_expression} that, at least locally, the choice of the rotation angle can be made sufficiently smooth, i.e.,  for all ~$x\in\omega$~ there exists a neighborhood $U$ of $x$  and a function $\widetilde{\alpha}\in C^1(U\cap \omega,\R)$ which denotes the rotation angle in the Euler-Rodrigues formula.

	In view of \eqref{Q}, the problem \eqref{origin3424} can be recast (at least locally) as follows
	\begin{align}
	\D m=\Big((1-\cos \widetilde{\alpha})\,n_0\otimes n_0+\cos\widetilde{\alpha}\,\id+\sin\widetilde{\alpha}\,\Anti(n_0)\Big)\D y_0.
	\end{align}
	Since $(n_0\otimes n_0)\D y_0=n_0\otimes ([\D y_0]^Tn_0)=0$, the latter formula simplifies to
	\begin{align}
	\D m=\cos\widetilde{\alpha}\,\D y_0+\sin\widetilde{\alpha}\,\Anti(n_0)\,\D y_0.
	\end{align}
	Obviously we have
	\begin{align}\label{m}
	\nonumber\partial_1 m=\cos\widetilde{\alpha}\,\partial_1 y_0+\sin\widetilde{\alpha}\,(n_0\times \partial_1 y_0) \quad 
	\text{and}\quad \partial_2 m=\cos\widetilde{\alpha}\,\partial_2 y_0+\sin\widetilde{\alpha}\,(n_0\times \partial_2 y_0)\,,
	\end{align}
	where for $i=1,2$, we used that $\Anti(n_0)\partial_i\,y_0=n_0\times \partial_iy_0$.
	By taking the mixed derivatives, we arrive at
	\begin{align}
&\partial_2 \partial_1 m=\partial_2 (\cos\widetilde{\alpha} \,\,\partial_1 y_0)+\partial_2 (\sin\widetilde{\alpha}\,(n_0\times \partial_1 y_0))\\
&=-\sin\widetilde{\alpha}\,\partial_2 \widetilde{\alpha}\,\partial_1 y_0+\cos\widetilde{\alpha}\,\partial_2 \partial_1 y_0+\cos\widetilde{\alpha}\,\partial_2 \widetilde{\alpha}\,(n_0\times \partial_1 y_0)+\sin\widetilde{\alpha}\,\partial_2 n_0\times \partial_1 y_0 +\sin\widetilde{\alpha}\,n_0\times \partial_2 \partial_1 y_0, \notag\\
\shortintertext{as well as}
	&\partial_1 \partial_2 m=\partial_1 (\cos\widetilde{\alpha}\,\,\partial_2 y_0)+\partial_1 (\sin\widetilde{\alpha}\,(n_0\times \partial_2 y_0))\\
	&=-\sin\widetilde{\alpha}\,\partial_1 \widetilde{\alpha}\,\partial_2 y_0+\cos\widetilde{\alpha}\;\partial_1 \partial_2 y_0+\cos\widetilde{\alpha}\,\partial_1 \widetilde{\alpha}\,(n_0\times \partial_2 y_0)+\sin\widetilde{\alpha}\;\partial_1 n_0\times \partial_2 y_0+\sin\widetilde{\alpha}\,n_0\times \partial_1 \partial_2 y_0. \notag
	\end{align}
	By using the equality of mixed derivatives for $m,y_0\in C^2(\omega,\R^3)$ we must have
	\begin{align}\label{eq:A,B2}
	\partial_2 \widetilde{\alpha}\underbrace{\Big(-\sin\widetilde{\alpha}\;\partial_1 y_0+\cos\widetilde{\alpha}\,(n_0\times \partial_1 y_0)\Big)}_{\eqqcolon\vv*{Y}{\widetilde{\alpha}}}&+\sin\widetilde{\alpha}\;\underbrace{\partial_2 n_0\times \partial_1 y_0}_{=\vv{B}}\\
\nonumber	&= \!\partial_1 \widetilde{\alpha}\underbrace{\Big(\!-\sin\widetilde{\alpha}\,\partial_2 y_0+\!\cos\widetilde{\alpha}(n_0\times\!\partial_2 y_0)\Big)}_{\eqqcolon-\vv*{X}{\widetilde{\alpha}}}+\!\sin\widetilde{\alpha}\;\underbrace{\!\partial_1 n_0\times \partial_2 y_0}_{=\vv{A}}.
	\end{align}
As in the linear case we obtain that $\vv{A}$ and $\vv{B}$ are normal vectors whereas $\vv*{X}{\widetilde{\alpha}}$ and $\vv*{Y}{\widetilde{\alpha}}$ are linear independent tangent vectors, since ~$\skalarProd{\vv*{X}{\widetilde{\alpha}}}{n_0} =0$, ~ $\skalarProd{\vv*{Y}{\widetilde{\alpha}}}{n_0} =0$ ~ and
\begin{align}
 \vv*{X}{\widetilde{\alpha}}\times \vv*{Y}{\widetilde{\alpha}} & = (\sin\widetilde{\alpha}\, \partial_2 y_0+\cos\widetilde{\alpha}\, \vv*{X}{0})\times(-\sin\widetilde{\alpha}\,\partial_1y_0+\cos\widetilde{\alpha}\,\vv*{Y}{0}) \notag\\
 & = \sin^2\widetilde{\alpha}\, \partial_1 y_0\times\partial_2 y_0 + \cos^2\widetilde{\alpha}\,  \vv*{X}{0}\times \vv*{Y}{0} + \sin\widetilde{\alpha}\cos\widetilde{\alpha}(\partial_2 y_0 \times \vv*{Y}{0} - \vv*{X}{0}\times\partial_1 y_0) \notag\\
  & \underset{\ast}{\overset{\mathclap{\eqref{eq:X0Y0}}}{=}} \ \sin^2\widetilde{\alpha}\,\partial_1 y_0\times\partial_2 y_0 +  \cos^2\widetilde{\alpha}\, \partial_1 y_0\times\partial_2 y_0 = \partial_1 y_0\times\partial_2 y_0,
\end{align}
where in $\ast$ we also used that $$ \partial_2 y_0 \times \vv*{Y}{0} - \vv*{X}{0}\times\partial_1 y_0 = \partial_2 y_0 \times (n_0\times\partial_1y_0)+(n_0\times\partial_2y_0)\times\partial_1y_0 = 0.$$

Thus, the vector fields $\vv*{X}{\widetilde{\alpha}}$,  $\vv*{Y}{\widetilde{\alpha}}$ and $n_0$ form a $3-$frame on the surface $y_0(\omega)$. However, \eqref{eq:A,B2} reads,
\begin{align}\label{eq:linindepentalpha}
 \partial_1\widetilde{\alpha} \cdot\vv*{X}{\widetilde{\alpha}} + \partial_2\widetilde{\alpha}\cdot \vv*{Y}{\widetilde{\alpha}} = \sin\widetilde{\alpha}\cdot(\vv{A}-\vv{B})= \widetilde\delta\cdot n_0,
\end{align}
with a scalar field $\widetilde\delta$, so that by the linear independence of the vector fields  $\vv*{X}{\widetilde{\alpha}}$,  $\vv*{Y}{\widetilde{\alpha}}$ and $n_0$ we  must always have $\partial_1\widetilde{\alpha}=\partial_2\widetilde{\alpha}=\widetilde\delta=0$,  which gives $\widetilde{\alpha}\equiv\operatorname{const}$ in $U\cap \omega$ and also $\cos\widetilde{\alpha}\equiv\operatorname{const}$ as well as $\sin\widetilde{\alpha}\equiv\operatorname{const}$ in $U\cap \omega$. Consequently, we also have $\cos\alpha\equiv\operatorname{const}$ and $\sin\alpha\equiv\operatorname{const}$ in $U\cap \omega$ for any choice of an angle function $\alpha:\omega\to\R$. It follows from the expressions in \eqref{eq:tan_expression} that both functions $\cos\alpha(x)$ and $\sin\alpha(x)$ are continuous which yields the conclusion.
\end{proof}

\begin{remark}\label{rem:constantangle}
 Note that the rotation angle $\alpha$ can always be constant but unconstrained, if, e.g., no further boundary conditions are appended. This is, e.g., provided by members of the same \textit{associate family} of \textit{minimal surfaces} (i.e., it holds $\EuScript{H}\equiv0$). The most prominent example is the catenoid and helicoid family, see Appendix. More precisely, one can bend the catenoid without stretching into a portion of a helicoid in such a way that the surface normals remain unchanged. For further examples of associate families of minimal surfaces we refer the reader to \cite[Chapter 3]{minimalSurfaces339}. 
\end{remark}

Now we are ready to turn to the large drill rotation case and prove our main result.

\begin{proof}[Proof of Proposition \ref{prop1}]
 Using the boundary condition $m|_{\gamma_d}=y_0|_{\gamma_d}$, Lemma \ref{lem1} allows to conclude that $Q|_{\gamma_d}\equiv \id$ which implies $\sin\alpha|_{\gamma_d}=0$ and $\cos\alpha|_{\gamma_d}=1$. By Lemma \ref{LemAngle2} we have $\sin\alpha\equiv\operatorname{const}$ and $\cos\alpha\equiv\operatorname{const}$, so that $\sin\alpha\equiv0$ and $\cos\alpha\equiv1$ on $\overline{\omega}$, and consequently by the Euler-Rodrigues representation it follows $Q\equiv\id$.
\end{proof}

\begin{proof}[Proof of Corollary \ref{cor:Intro}]
	Consider the lifted quantities $(\D m|n)$ versus $(\D y_0|n_0)$. It holds
	\begin{align}\label{matrix}
		(\D m|n)^T(\D m|n)=
		\left(\begin{NiceArray}{CC|C}
 \Block{2-2}{[\D m]^T\D m} && 0 \\
 &\hspace*{1.5cm}& 0 \\
 \hline
 0 & 0& 1
 \end{NiceArray}\right),\quad
 (\D y_0|n_0)^T(\D y_0|n_0)=
 \left(\begin{NiceArray}{CC|C}
 \Block{2-2}{[\D y_0]^T\D y_0} && 0 \\
 &\hspace*{1.5cm}& 0 \\
 \hline
 0 & 0& 1
 \end{NiceArray}\right)\,.
	\end{align}
	Since by assumption $\mathrm{I}_m=\D m^T\D m=\D y_0^T\D y_0=\mathrm{I}_{y_0}$, it follows from (\ref{matrix}) that 
	\begin{align}\label{matrix2}
	(\D m|n)^T(\D m|n)=(\D y_0|n_0)^T(\D y_0|n_0)\,.
	\end{align}
	Then for all $x\in\omega$ there exists $Q(x)\in \SO(3)$ such that $(\D m(x)|n(x))=Q(x)(\D y_0(x)|n_0(x))$. The assumption $n(x)=n_0(x)$ gives 
	\begin{align}\label{end}
	(\D m|n_0)=Q(\D y_0|n_0)\,.
	\end{align}
	Multiplying both sides with $e_3$, we obtain $Q(x)n_0(x)=n_0(x)$ for all $x\in\omega$ and from \eqref{end} we obtain 
	\begin{align}
	Q=(\D m|n_0)(\D y_0|n_0)^{-1}=(\D m|n_0)\;\frac{1}{\det(\D y_0|n_0)}\,\Cof (\D y_0|n_0)\,.
	\end{align}
	Since by assumption $m,y_0\in C^2(\overline{\omega},\R^3)$ and $y_0$ is a regular surface with $\det(\D y_0|n_0)\geq c^+>0$ we observe that necessarily $Q\in C^1(\overline{\omega},\SO(3))$. 
	Thus, we are again in the situation of Proposition \ref{prop1} and the proof is finished.
\end{proof}

\begin{proposition}\label{prop:meanCurvature2}
 Let $\omega\subset \R^2$ be a bounded Lipschitz domain. Assume that $m,\,y_0\in C^2(\omega,\R^3)$ are regular surfaces, $Q\in C^1(\omega,\SO(3))$ and 
	\begin{align}\label{origin12}
	\D m(x)&=Q(x)\D y_0(x)\,,\quad\quad Q(x)\,n_0(x)=n_0(x)\,,\quad\quad x\in \omega\,,
	\end{align}
	where $n_0=\frac{\partial_1 y_0\times \partial_2 y_0}{\norm{\partial_1 y_0\times \partial_2 y_0}}$ denotes the normal field on $y_0(\omega)$. Then
	\begin{equation}\label{eq:conclH2}
	 \forall \ x \in \omega: \quad \sin\alpha(x)=0 \quad \text{or}\quad \EuScript{H}(x)=0,
	\end{equation}
	where $\alpha:\omega\to\R$ denotes the rotations angle in the Euler-Rodrigues representation of $Q$.
\end{proposition}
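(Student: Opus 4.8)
The plan is to leverage Lemma \ref{LemAngle2}, which already tells us that $\cos\alpha$ and $\sin\alpha$ are (globally) constant, together with the compatibility relation \eqref{eq:linindepentalpha} derived in its proof. That relation reads
\begin{align}
 \partial_1\widetilde{\alpha}\cdot\vv*{X}{\widetilde{\alpha}} + \partial_2\widetilde{\alpha}\cdot\vv*{Y}{\widetilde{\alpha}} = \sin\widetilde{\alpha}\cdot(\vv{A}-\vv{B}) = \widetilde{\delta}\cdot n_0\,,
\end{align}
and since $\partial_1\widetilde{\alpha}=\partial_2\widetilde{\alpha}=0$ locally, the middle term must vanish, i.e. $\sin\widetilde{\alpha}\cdot(\vv{A}-\vv{B})=0$ pointwise. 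So the argument parallels almost verbatim the proof of Proposition \ref{prop:meanCurvature1} in the small-rotation case, with $\alpha$ replaced by $\sin\widetilde{\alpha}$.

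The key step is then to recall the identity \eqref{eq:meanCurvature},
\begin{align}
 \vv{A}-\vv{B} = \partial_1 n_0\times\partial_2 y_0 - \partial_2 n_0\times\partial_1 y_0 = -2\,\EuScript{H}\,\partial_1 y_0\times\partial_2 y_0\,,
\end{align}
so that $\sin\widetilde{\alpha}\cdot(\vv{A}-\vv{B}) = -2\sin\widetilde{\alpha}\cdot\EuScript{H}\cdot(\partial_1 y_0\times\partial_2 y_0)$. Since $y_0$ is regular, $\partial_1 y_0\times\partial_2 y_0\neq 0$ everywhere, hence vanishing of this vector forces $\sin\widetilde{\alpha}(x)=0$ or $\EuScript{H}(x)=0$ at each $x$. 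As the local angle function $\widetilde{\alpha}$ agrees with $\alpha$ up to an additive multiple of $2\pi$, we have $\sin\widetilde{\alpha}(x)=\sin\alpha(x)$, which gives the claimed dichotomy \eqref{eq:conclH2} on all of $\omega$ (the local neighborhoods cover $\omega$).

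I do not anticipate a genuine obstacle here: the heavy lifting — producing a locally $C^1$ angle function via \eqref{eq:tan_expression}, taking mixed derivatives, and establishing the $3$-frame $\vv*{X}{\widetilde{\alpha}},\vv*{Y}{\widetilde{\alpha}},n_0$ — was already done in Lemma \ref{LemAngle2}. The only point requiring a word of care is that $\widetilde{\alpha}$ is only defined locally and is not globally continuous as an angle, but since $\sin$ is $2\pi$-periodic the conclusion is frame-independent and patches together globally; one could alternatively phrase the whole argument directly in terms of $\sin\alpha$, which \eqref{eq:tan_expression} shows is continuous on $\omega$. I would simply write: from \eqref{eq:linindepentalpha} and $\partial_i\widetilde{\alpha}\equiv 0$ we get $\sin\widetilde{\alpha}(\vv{A}-\vv{B})\equiv 0$; combine with \eqref{eq:meanCurvature} and regularity of $y_0$ to conclude.
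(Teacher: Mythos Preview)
Your proposal is correct and essentially the same as the paper's proof: the paper invokes Lemma \ref{LemAngle2} for constancy of $\sin\alpha$, re-derives (implicitly from \eqref{eq:linindepentalpha}) that $0=\sin\alpha\cdot(\vv{A}-\vv{B})$, and then applies \eqref{eq:meanCurvature} together with regularity of $y_0$ to obtain the dichotomy. Your extra care about the local angle $\widetilde{\alpha}$ versus the global $\sin\alpha$ is fine but not strictly needed, since Lemma \ref{LemAngle2} already delivers $\sin\alpha\equiv\operatorname{const}$ globally.
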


\begin{proof}
By Lemma \ref{LemAngle2} we have $\cos \alpha\equiv\operatorname{const}$ and $\sin\alpha\equiv
\operatorname{const}$, so that taking and comparing the mixed derivatives of $m$ we obtain
\begin{equation*}
0 = \sin\alpha\cdot(\underbrace{\partial_1 n_0\times \partial_2 y_0}_{=\vv{A}} - \underbrace{\partial_2 n_0\times \partial_1 y_0}_{=\vv{B}})\overset{\eqref{eq:meanCurvature}}{=} -2\,\sin\alpha\cdot \EuScript{H}\cdot\partial_1 y_0\times \partial_2 y_0\,,
 \end{equation*}
 which implies that we have pointwise either vanishing $\sin \alpha$ or a vanishing mean curvature $\EuScript{H}$. 
\end{proof}

\begin{remark}[Symmetry of the second fundamental form]
 Conclusion \eqref{eq:conclH2} can also be obtained from the symmetry property of the second fundamental form on the surface $m(\omega)$. Indeed, the normal vector field on $m(\omega)$ coincides with $n_0$ since
 \begin{align}
  \partial_1 m \times \partial_2 m \overset{\eqref{origin12}_1}{=} (Q\,\partial_1y_0)\times (Q\,\partial_2y_0)\overset{Q\in\SO(3)}{=}Q\, \partial_1 y_0\times \partial_2 y_0 \overset{\eqref{origin12}_2}{=}\partial_1 y_0\times \partial_2 y_0.
 \end{align}
Thus, for the second fundamental form on $m(\omega)$ we obtain
\begin{align}
 \Sym(2)\ni \mathrm{II}_m&= -[\D m]^T\D n= -[\D m]^T\D n_0 \overset{\eqref{origin12}_1}{=}-[\D y_0]^T Q^T \D n_0 \notag \\
 & \overset{\mathclap{\eqref{Q}}}{=} 
 -[\D y_0]^T [(1-\cos \alpha)\,n_0\otimes n_0+\cos\alpha\,\id+\sin\alpha\,\Anti(n_0)]^T\D n_0 \notag \\
 & = (1-\cos \alpha)\underset{=0}{\underbrace{[\D y_0]^Tn_0\otimes n_0}}\D n_0-\cos\alpha[\D y_0]^T\D n_0-\sin\alpha[\D y_0]^T\Anti(n_0)^T\D n_0\notag \\
 & = \cos\alpha \,\mathrm{II}_{y_0}+ \sin\alpha\,[\D y_0]^T\Anti(n_0)\D n_0.
 \end{align}
 Since $ \mathrm{II}_{y_0}\in\Sym(2)$, we are again left with the single condition
\begin{align}
 \sin\alpha\,[\D y_0]^T \Anti(n_0)\D n_0 \in \Sym(2) \quad 
 \overset{\eqref{eq:ersterSchritt1}}{\Longleftrightarrow} \quad \{\sin\alpha = 0 \quad \text{or} \quad \EuScript{H}=0\}.
\end{align}
\end{remark}

\begin{corollary}
 Let $\omega\subset \R^2$ be a bounded Lipschitz domain. Assume that $m,\,y_0\in C^2(\omega,\R^3)$ are regular surfaces, $Q\in C^1(\omega,\SO(3))$ and 
	\begin{align}
	\D m(x)&=Q(x)\D y_0(x)\,,\quad\quad Q(x)\,n_0(x)=n_0(x)\,,\quad\quad x\in \omega\,,
	\end{align}
	where $n_0=\frac{\partial_1 y_0\times \partial_2 y_0}{\norm{\partial_1 y_0\times \partial_2 y_0}}$ denotes the normal field on $y_0(\omega)$. If $y_0$ is not a minimal surface, i.e., there exists one point $x_0$ such that the mean curvature of $y_0$ is distinct from zero at $y_0(x_0)$, then $m(x)=y_0(x)+b$ or   $m(x)=-y_0(x)+b$  for some constant translation $b\in\R^3$.
\end{corollary}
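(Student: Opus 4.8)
The plan is to combine the constancy statement of Lemma~\ref{LemAngle2} with the dichotomy of Proposition~\ref{prop:meanCurvature2} and then to read the rotation field off the Euler--Rodrigues formula~\eqref{Q}. First I would choose a point $x_0\in\omega$ with $\EuScript{H}(x_0)\neq 0$, which exists precisely because $y_0$ is assumed not to be a minimal surface. Proposition~\ref{prop:meanCurvature2} then forces $\sin\alpha(x_0)=0$, where $\alpha$ denotes the rotation angle in the Euler--Rodrigues representation of $Q$. Since $\cos\alpha$ and $\sin\alpha$ are globally defined and continuous through the expressions in~\eqref{eq:tan_expression}, and constant by Lemma~\ref{LemAngle2}, we conclude $\sin\alpha\equiv 0$ on all of $\omega$, hence $\cos\alpha\equiv 1$ or $\cos\alpha\equiv -1$; the constancy of $\cos\alpha$ on the connected domain $\omega$ rules out any jump between these two values.

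It then remains to treat the two resulting cases. If $\cos\alpha\equiv 1$, formula~\eqref{Q} collapses to $Q\equiv\id_3$, so the first relation in~\eqref{origin12} reads $\D m\equiv\D y_0$ and, $\omega$ being connected, integrates to $m=y_0+b$ for some $b\in\R^3$. If $\cos\alpha\equiv -1$, formula~\eqref{Q} gives $Q\equiv 2\,n_0\otimes n_0-\id_3$, the rotation by $\pi$ about the axis $n_0$; using the identity $(n_0\otimes n_0)\D y_0=n_0\otimes([\D y_0]^Tn_0)=0$ already exploited in the proof of Lemma~\ref{LemAngle2}, we obtain $\D m=Q\,\D y_0\equiv-\D y_0$, which integrates to $m=-y_0+b$ for some $b\in\R^3$. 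This exhausts the possibilities and proves the claim.

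I do not expect a serious obstacle: once Lemma~\ref{LemAngle2} and Proposition~\ref{prop:meanCurvature2} are available, the argument is essentially bookkeeping. The points that deserve a little care are the passage from local to global constancy of $\cos\alpha$ and $\sin\alpha$ (which relies on their continuity via~\eqref{eq:tan_expression}), the use of connectedness of the Lipschitz domain $\omega$ both to exclude a sign jump in $\cos\alpha$ and to integrate $\D(m\mp y_0)\equiv 0$ to a constant, and the remark that the sign in front of $y_0$ is a single global sign. For context one may also note that, unlike in Proposition~\ref{prop1}, no boundary condition is imposed here and the reflected branch $m=-y_0+b$ genuinely occurs --- for instance for a spherical cap $y_0$ --- so the stated dichotomy is \emph{sharp}.
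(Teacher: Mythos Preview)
Your proof is correct and follows essentially the same route as the paper's: invoke Proposition~\ref{prop:meanCurvature2} at a point with $\EuScript{H}(x_0)\neq0$ to get $\sin\alpha(x_0)=0$, use Lemma~\ref{LemAngle2} to globalize, and then read off $\D m=\pm\D y_0$ from the Euler--Rodrigues formula in the two cases $\cos\alpha\equiv\pm1$ via $(n_0\otimes n_0)\D y_0=0$. Your explicit remarks on connectedness (for integrating $\D(m\mp y_0)=0$ and excluding a sign jump) and on the sharpness of the reflected branch are welcome additions but do not change the substance of the argument.
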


\begin{proof}
 It follows from Propostion \ref{prop:meanCurvature2}, that if the mean curvature $\EuScript{H}$ is distinct from $0$ at some point $y_0(x_0)$, we must have $\sin\alpha(x_0)= 0$. Thus, $\sin\alpha\equiv0$ and $\cos\alpha\equiv1$ or  $\sin\alpha\equiv0$ and $\cos\alpha\equiv-1$, where the sign of $\cos\alpha$ does not change, cf.~ Lemma \ref{LemAngle2}. Thus, in the first case ($\cos\alpha\equiv1$, i.e., $\alpha\in2\pi\mathbb Z$) we obtain for
 \begin{equation}\label{eq:jdhlkjehdl}
  Q(x)\overset{\eqref{Q}}{=}(1-\cos\alpha)n_0(x)\otimes n_0(x)+ \id\,,
 \end{equation}
and a multiplication with $\D y_0$ gives
\begin{equation*}
 \D m(x)= Q(x)\D y_0(x)\overset{\eqref{eq:jdhlkjehdl}}{=} (1-\cos\alpha)\underset{=0}{\underbrace{n_0(x)\otimes n_0(x)\D y_0(x)}} +\D y_0(x) =\D y_0(x) .
 \end{equation*}
 In the second case ($\cos\alpha\equiv-1$, i.e., $\alpha-\pi\in2\pi\mathbb Z$) we have
 \begin{equation*}
  Q(x)\overset{\eqref{Q}}{=}(1-\cos\alpha)n_0(x)\otimes n_0(x)- \id\quad \Rightarrow \quad \D m(x)= -\D y_0(x)\,.\qedhere
 \end{equation*}
\end{proof}

\begin{remark}
 The ``flipped'' solution $m(x)=-y_0(x)+b$ appears only since no boundary conditions are present. From a mechanical point of view this branch is irrelevant.
\end{remark}

\begin{remark}
 	For a $C^\infty$-embedding $y_0$ a comparable result, using involved techniques from differential geometry, has been obtained in \cite{abe1975isometric, eschenburg2010compatibility}.
\end{remark}

\section{Conclusion}\label{conclusion}
We have proved some improved rigidity results for $C^2$-smooth regular embedded surfaces. The underlying mechanical problem, namely the possibility of a pure in-plane drill rotation field as deformation mode of a surface when boundary conditions of place are prescribed somewhere has been negatively answered, for both small and large drill rotations, assuming some natural level of smoothness for the rotation fields. 

Considering classical FEM shell formulations the drilling degrees of freedom are used to obtain a precise coupling of plane shell elements in non-plane applications, in such a way, that rotations around an axis in the plane of one element are coupled to rotations around the normal of the neighboring element. Since finite elements use their shape functions as interpolations, they do not obey the kinematics everywhere. Thus, incompatibilities in the shape functions may occur, but, should not get in conflict
with the convergence requirements such as the patch test and any "torsional spring stiffness" attributed to this pseudo-deformation mode must be regarded with extreme caution (here, the Cosserat couple modulus $\mu_c\geq 0$). In fact, in many shell models such a stiffness is treated as a material parameter and many very effective finite elements use incompatible kinematic fields, mainly to overcome geometric deficiencies which may result in some kind of locking behavior. Our development suggests, however, that the fitting of the Cosserat couple modulus $\mu_c$ would depend on the imposed boundary condition, i.e., how strict drill rotations are constrained at the boundary $\gamma_{d}$ of the shell.

Then, the mentioned stiffness is a boundary value problem dependent parameter which needs to be determined for each new problem again. Thus one should call it always a fictitious stifness and treat it accordingly, and this applies to classical FEM-shell models and Cosserat surfaces.

 However, it is remarkable that in the planar Cosserat shell model \cite{agn_neff2004geometrically} existence can be shown also for zero Cosserat couple modulus $\mu_c\equiv 0$ \cite{agn_neff2007geometricallyI} (for $q>0$ in \eqref{mimized} and using a generalized Korn-inequality \cite{agn_neff2002korn}) thus disposing completely of the above described problem. In other words, the drilling degree of freedom is kept, but not connected  to any fictitious torsional spring. While in a linear model this would imply that the drilling degree of freedom decouples, this is not necessarily the case in nonlinear Cosserat models based on exact rotations.
 
\begin{akn*}
	The last author is indebted to Amit Acharya (Carnegie Mellon University) for clarifying discussions regarding his notion of normality preserving shell deformations. We also thank Robert L. Bryant (Duke University, North-Carolina) for fruitful discussions. Discussions with Marek L. Szwabowicz (Maritime University, Gdansk, Poland) and Krzysztof  Wiśniewski (Institute of Fundamental Technological Research               Polish Academy of Sciences, Warsaw, Poland)  are also acknowledged. This research has been supported by the Deutsche Forschungsgemeinschaft (DFG, German Research Foundation)-Project No. \textbf{415894848}, \;\textbf{Neff 902/8-1}\;(P. Lewintan and P. Neff).	Maryam Mohammadi Saem is grateful for a grant of the Faculty of Mathematics, University Duisburg-Essen. \\
\end{akn*} 

\section{References}
\printbibliography[heading=none]

 \begin{alphasection}
\section{Appendix}
\subsection{Associate family of minimal surfaces: the catenoid and helicoid family}\label{family}
Meusnier in 1776 discovered that the helicoid and the catenoid are minimal surfaces, i.e., they have vanishing mean curvature. It was Bonnet who showed that these minimal surfaces belong to the same associate family, more precisely, the catenoid can be bend without stretching into a portion of a helicoid in such a way that the surface normals remain unchanged. For further examples of associate families of minimal surfaces we refer the reader to \cite[Chapter 3]{minimalSurfaces339}. 

Let $\omega=\R\times[0,2\pi)$ then  the catenoid $X^{\text{cat}}\in C^\infty(\omega,\R^3)$ and the helicoid $X^\text{hel}\in C^\infty(\omega,\R^3)$ are parametrized by
\begin{align}
X^\text{cat}(x_1,x_2)=\matr{\cos x_2\cosh x_1\\\sin x_2\cosh x_1\\x_1}\,,\qquad \quad X^\text{hel}(x_1,x_2)=\matr{\hspace{0.2cm}\sin x_2\sinh x_1\\-\cos x_2\sinh x_1\\x_2}\,,
\end{align}
and the associate surfaces, parametrized by $\theta$, are given by
\begin{align}\label{eq:associate}
X^\theta(x_1,x_2)=\cos\theta\cdot X^\text{cat}(x_1,x_2)+\sin\theta\cdot X^\text{hel}(x_1,x_2)\, \quad \text{for $\theta\in[0,2\pi]$}.
\end{align}
Their partial derivatives fulfill
\begin{align}\label{eq:partials}
\partial_1 X^\theta=\cos\theta\cdot\partial_1 \, X^\text{cat}-\sin\theta\cdot\partial_2\, X^\text{cat}\quad\text{and}\quad
\partial_2 X^\theta=\sin\theta\cdot\partial_1 \, X^\text{cat}+\cos\theta\cdot\partial_2\, X^\text{cat}\,,
\end{align}
 so that, the surface normals remain unchanged
\begin{equation}\label{eq:normals}
 n_{X^\theta}(x_1,x_2) = n_{X^\text{cat}}(x_1,x_2)=n_{X^\text{hel}}(x_1,x_2) \qquad \text{for all $(x_1,x_2)\in\omega$.}
\end{equation}
Further properties of the members of this associate family $X^\theta$ can be found in \cite{karcher1996construction} and \cite[Chapter 3]{minimalSurfaces339}, as well as the references cited therein.

\begin{SCfigure}[][h!]
\begin{subfigure}[b]{2.7cm}
 \includegraphics[page=1,width=2.5cm]{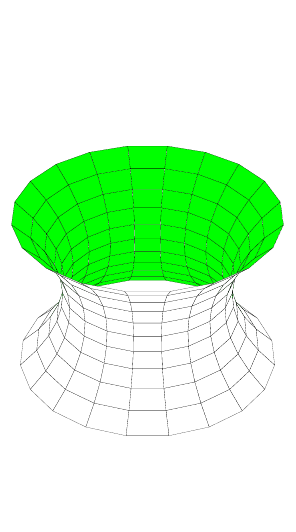}
 \caption{$\theta=0$}
\end{subfigure}
\begin{subfigure}[b]{2.7cm}
 \includegraphics[page=2,width=2.5cm]{associate_family}
 \caption{$\theta=\frac{\pi}{25}$}
\end{subfigure}
\begin{subfigure}[b]{2.7cm}
 \includegraphics[page=3,width=2.5cm]{associate_family}
 \caption{$\theta=\frac\pi4$}
\end{subfigure}
\begin{subfigure}[b]{2.7cm}
 \includegraphics[page=4,width=2.5cm]{associate_family}
 \caption{$\theta=\frac\pi2$}
\end{subfigure}
 \caption{Four consecutive steps of an isometric deformation of a catenoid (left) into a helicoid (right). Such a transformation exists, since both are members of the same associate family $X^\theta$. Note that every member of the deformation family has vanishing mean curvature, i.e., is a minimal surface.}
\end{SCfigure}

To see \eqref{eq:partials} and \eqref{eq:normals} we take the partial derivatives of \eqref{eq:associate}:
\begin{equation}
 \partial_j X^\theta = \cos\theta\cdot \partial_j X^\text{cat}+\sin\theta\cdot \partial_j X^\text{hel} \qquad \text{for $j=1,2$.}
\end{equation}
Since the partial derivatives of the catenoid and the helicoid satisfy the Cauchy-Riemann equations
\begin{equation}
 \partial_1 X^\text{cat}= \partial_2 X^\text{hel} \quad \text{and}\quad  \partial_2 X^\text{cat}=- \partial_1 X^\text{hel}\,,
\end{equation}
we obtain \eqref{eq:partials}, which in matrix notation reads
\begin{align}\label{eq:matrixnotation3123}
 \D X^\theta = \D X^\text{cat}\matr{\cos\theta & \sin\theta \\ -\sin\theta & \cos\theta}.
\end{align}
Moreover,
\begin{align}
 \partial_1 X^\theta \times \partial_2 X^\theta \ &\overset{\mathclap{\eqref{eq:partials}}}{=}\ \cos^2\theta\cdot \partial_1 X^\text{cat} \times \partial_2 X^\text{cat} - \sin^2\theta \cdot \partial_2 X^\text{cat} \times \partial_1 X^\text{cat} \notag\\
 &= \partial_1 X^\text{cat} \times \partial_2 X^\text{cat}\,,
\end{align}
which gives \eqref{eq:normals}. Furthermore,
\begin{equation}\label{eq:concretepartials}
 \partial_1 X^{\text{cat}}=\matr{\cos x_2\sinh x_1\\\sin x_2\sinh x_1\\1} \quad\text{and}\quad \partial_2 X^{\text{cat}}=\matr{-\sin x_2\cosh x_1\\\cos x_2\cosh x_1\\0},
\end{equation}
so that
\begin{equation}\label{eq:partial_cat}
 \norm{ \partial_1 X^{\text{cat}}}^2= \sinh^2x_1 +1 = \cosh^2x_1 = \norm{ \partial_2 X^{\text{cat}}}^2 \quad\text{and}\quad \skalarProd{\partial_1 X^{\text{cat}}}{\partial_2 X^{\text{cat}}}=0.
\end{equation}
In regard with \eqref{eq:partials} we obtain
\begin{subequations}
\begin{align}
 \norm{ \partial_1 X^\theta}^2&\overset{\eqref{eq:partial_cat}_2}{=}\cos^2\theta \norm{ \partial_1 X^{\text{cat}}}^2 + \sin^2\theta\norm{ \partial_2 X^{\text{cat}}}^2 \overset{\eqref{eq:partial_cat}_1}{=} \cosh^2 x_1\,,\\
  \norm{ \partial_2 X^\theta}^2&\overset{\eqref{eq:partial_cat}_2}{=}\sin^2\theta \norm{ \partial_1 X^{\text{cat}}}^2 + \cos^2\theta\norm{ \partial_2 X^{\text{cat}}}^2 \overset{\eqref{eq:partial_cat}_1}{=} \cosh^2 x_1=\norm{ \partial_1 X^\theta}^2\,,\\
  \skalarProd{\partial_1 X^\theta}{\partial_2 X^\theta}&\overset{\eqref{eq:partial_cat}_2}{=}\cos\theta\sin\theta\norm{ \partial_1 X^{\text{cat}}}^2 -\sin\theta\cos\theta\norm{ \partial_2 X^{\text{cat}}}^2\overset{\eqref{eq:partial_cat}_1}{=} 0.
\end{align}
\end{subequations}
In other words, the first fundamental form of all members of the associate family remains unchanged and is given by
\begin{equation}\label{eq:1ff_associate}
\mathrm{I}_{X^\theta}(x_1,x_2)= [\D X^\theta]^T\D X^\theta = \matr{\norm{ \partial_1 X^\theta}^2 &  \skalarProd{\partial_1 X^\theta}{\partial_2 X^\theta} \\ \skalarProd{\partial_1 X^\theta}{\partial_2 X^\theta} &  \norm{ \partial_2 X^\theta}^2} = \cosh^2 x_1\cdot \id_2.
\end{equation}
Thus,
\begin{equation}
 [\D X^\text{cat}]^T\D X^\theta \overset{\eqref{eq:matrixnotation3123}}{=}[\D X^\text{cat}]^T\D X^\text{cat}\matr{\cos\theta & \sin\theta \\ -\sin\theta & \cos\theta} \overset{\eqref{eq:1ff_associate}}{=}\cosh^2 x_1\matr{\cos\theta & \sin\theta \\ -\sin\theta & \cos\theta}\,,
\end{equation}
and $[\D X^\text{cat}]^T\D X^\theta\notin\Sym^+(2)$ is not a pure in-plane stretch.

Moreover, as in \eqref{matrix2}, there exists an in-plane drill rotation $Q^\theta(x)\in\SO(3)$ which fulfills
\begin{equation}
 \D X^\theta(x) = Q^\theta(x)\D X^\text{cat}(x) \quad \text{and}\quad Q^\theta(x)\,n_0(x)=n_0(x), \quad \text{where } n_0(x)\coloneqq n_{X^\theta}(x)=n_{X^\text{cat}}(x).
\end{equation}
Next we show, that the (constant) rotation angle, cf.~Lemma \ref{LemAngle2}, extracted from $Q^\theta(x)$ is already given by $-\theta$ (or differs from it by an integer multiple of $2\pi$), so that we have the representation
\begin{align}
 Q^\theta(x)&=(1-\cos(-\theta))\,n_0(x)\otimes n_0(x)+\cos(-\theta)\,\id+\sin(-\theta)\Anti(n_0(x))\notag \\
 & = (1-\cos\theta)\,n_0(x)\otimes n_0(x)+\cos\theta\,\id-\sin\theta\Anti(n_0(x)).\label{eq:Qtheta}
\end{align}
For that purpose, note that
\begin{equation}
 \partial_1 X^\text{cat} \times \partial_2 X^\text{cat} = \matr{-\cos x_2\cosh x_1 \\ -\sin x_2 \cosh x_1 \\ \sinh x_1 \cosh x_1 } \quad \Rightarrow \quad \norm{\partial_1 X^\text{cat} \times \partial_2 X^\text{cat}} = \cosh^2 x_1,
\end{equation}
so that
\begin{equation}\label{eq:mitnomalen}
 n_0 = \frac{1}{\cosh x_1}\matr{-\cos x_2\\ -\sin x_2 \\ \sinh x_1}, \quad \text{and}\quad n_0\times\partial_1 X^{\text{cat}}= \partial_2 X^{\text{cat}}, \quad n_0\times\partial_2 X^{\text{cat}}= -\partial_1 X^{\text{cat}}.
\end{equation}
Hence, with $n_0\otimes n_0\, \partial_j X^{\text{cat}}=n_0\,\skalarProd{n_0}{\partial_j X^{\text{cat}}}\equiv0$, we obtain
\begin{align}
 Q^\theta \partial_1 X^{\text{cat}} &\overset{\eqref{eq:Qtheta}}{=}\cos\theta\,\partial_1 X^{\text{cat}}-\sin\theta\Anti(n_0)\partial_1 X^{\text{cat}}\overset{\eqref{eq:mitnomalen}_2}{=}\cos\theta\,\partial_1 X^{\text{cat}}-\sin\theta\, \partial_2 X^{\text{cat}} \overset{\eqref{eq:partials}_1}{=}\partial_1 X^\theta\,,
 \intertext{as well as}
 Q^\theta \partial_2 X^{\text{cat}} &\overset{\eqref{eq:Qtheta}}{=}\cos\theta\,\partial_2 X^{\text{cat}}-\sin\theta\Anti(n_0)\partial_2 X^{\text{cat}}\overset{\eqref{eq:mitnomalen}_3}{=}\cos\theta\,\partial_2 X^{\text{cat}}+\sin\theta\, \partial_1 X^{\text{cat}} \overset{\eqref{eq:partials}_2}{=}\partial_2 X^\theta\,,
 \end{align}
and we have shown that 
\begin{equation}
 Q^\theta \D X^{\text{cat}}=\D X^\theta\,,
\end{equation}
where $Q^\theta$ has the expression \eqref{eq:Qtheta} and its columns read with the representation of the normal \eqref{eq:mitnomalen}$_1$
\begin{subequations}\label{eq:Qincoords}
\begin{align}
 Q^\theta\,e_1&=
 \begin{pmatrix}-\frac{\left( {\cos^2 x_2}-{\cosh^2 x_1}\right)  \cos{\theta }-{\cos^2 x_2}}{{\cosh^2 x_1}} \\[6pt]
-\frac{\cosh{x_1} \sinh{x_1} \sin{\theta }+\cos{x_2} \sin{x_2} \cos{\theta }-\cos{x_2} \sin{x_2}}{{\cosh^2 x_1}} \\[6pt]
-\frac{\cosh{x_1} \sin{x_2} \sin{\theta }-\sinh{x_1} \cos{x_2} \cos{\theta }+\sinh{x_1} \cos{x_2}}{{\cosh^2 x_1}}\end{pmatrix}\,,\\
 Q^\theta\,e_2&=
 \begin{pmatrix} \frac{\cosh{x_1} \sinh{x_1} \sin{\theta }-\cos{x_2} \sin{x_2} \cos{\theta }+\cos{x_2} \sin{x_2}}{{\cosh^2 x_1}} \\[6pt]
-\frac{\left( {\sin^2 x_2}-{\cosh^2 x_1}\right)  \cos{\theta }-{\sin^2 x_2}}{{\cosh^2 x_1}} \\[6pt]
 \frac{\cosh{x_1} \cos{x_2} \sin{\theta }+\sinh{x_1} \sin{x_2} \cos{\theta }-\sinh{x_1} \sin{x_2}}{{\cosh^2 x_1}} \end{pmatrix}\,,\\
Q^\theta\,e_3&=
 \begin{pmatrix} \frac{\cosh{x_1} \sin{x_2} \sin{\theta }+\sinh{x_1} \cos{x_2} \cos{\theta }-\sinh{x_1} \cos{x_2}}{{\cosh^2 x_1}}\\[6pt]
-\frac{\cosh{x_1} \cos{x_2} \sin{\theta }-\sinh{x_1} \sin{x_2} \cos{\theta }+\sinh{x_1} \sin{x_2}}{{\cosh^2 x_1}}\\[6pt]
 \frac{\cos{\theta }+{\cosh^2 x_1}-1}{{\cosh^2 x_1}}\end{pmatrix}\,.
 \end{align}
 \end{subequations}
Recall, that if a surface $X$ is parametrized conformally, i.e., it holds $\norm{\partial_1 X}=\norm{\partial_2 X}$ and $\skalarProd{\partial_1 X}{\partial_2 X}=0$, then it is a minimal surface (i.e.~has vanishing mean curvature everywhere) if and only if ~ $\Delta X \equiv 0$ ~ holds, cf.~\cite[p.72]{minimalSurfaces339}. Thus, in regard with \eqref{eq:1ff_associate}, to check that all members of the associate family $X^\theta$ are, indeed, minimal surfaces, we compute
\begin{equation}
 \Delta X^\theta \overset{\eqref{eq:partials}}{=} \cos\theta\cdot \Delta X^\text{cat} \overset{\eqref{eq:concretepartials}}{\equiv}0.
\end{equation}
Furthermore, if a minimal surface is parametrized conformally, then the same holds for its corresponding Gauss map, cf.~\cite[p.74]{minimalSurfaces339}. Indeed, all members of the associate family $X^\theta$ are minimal surfaces and for their Gauss maps (which all coincide) $n_0:\omega\to\mathbb{S}^2$ it holds
\begin{equation}
 \mathrm{I}_{n_0}=[\D n_0]^T\D n_0\overset{\eqref{eq:mitnomalen}_1}{=}\frac{1}{\cosh^2x_1}\cdot\id_2\,,
 \end{equation}
 which shows that $n_0$ is also parametrized conformally.
 
 Let us mention, that the constancy of the rotation angle can also be achieved without applying Lemma \ref{LemAngle2}. For that purpose, let us here call the in-plane drill rotation $\widehat{Q}(x)\in\SO(3)$ which fulfills
\begin{equation}
 \D X^\theta(x) = \widehat{Q}(x)\D X^\text{cat}(x) \quad \text{and}\quad \widehat{Q}(x)\,n_0(x)=n_0(x), \quad \text{where } n_0(x)\coloneqq n_{X^\theta}(x)=n_{X^\text{cat}}(x).
\end{equation}
Thus, as in \eqref{matrix2}, it follows
\begin{equation}
 (\D X^\theta \,\big|\,  n_0) = \widehat{Q} (\D X^\text{cat}\,\big|\, n_0) \quad \Rightarrow \quad \widehat{Q} = (\D X^\theta \,\big|\, n_0) (\D X^\text{cat}\,\big|\, n_0)^{-1}\,,
\end{equation}
and a direct computation gives the entries of $\widehat{Q}(x)$, which, indeed, coincide with \eqref{eq:Qincoords}. From the uniqueness of the Euler-Rodrigues representation it then follows that the corresponding rotation angle $\widehat{\alpha}(x)$ is constant and is given by $-\theta$ (or differs from it by an integer multiple of $2\pi$). Indeed, with \eqref{eq:tan_expression} we have
\begin{equation*}
 \sin(\widehat{\alpha}(x))\overset{\eqref{eq:tan_expression}}{=}-\frac{\tr\Big(\Anti(n_0(x))\widehat{Q}(x)\Big)}{2}\underset{\eqref{eq:Qincoords}}{\overset{\widehat{Q}=Q^\theta}{=}}-\sin\theta \quad \text{and} \quad 
  \cos(\widehat{\alpha}(x))\overset{\eqref{eq:tan_expression}}{=}-\frac{\tr\widehat{Q}(x)-1}{2}\underset{\eqref{eq:Qincoords}}{\overset{\widehat{Q}=Q^\theta}{=}}\cos\theta.
\end{equation*}

\end{alphasection}

\end{document}